\def\m{\mathcal}
\def\mb{\mathbb}
\def\dist{{\rm dist}}
\def\argmin{{\rm argmin~}}
\def\sign{{\rm sign~}}
\def\act{{\rm act}}
\def\supp{{\rm supp}}
\def\eps{\varepsilon}
\numberwithin{equation}{section}
\theoremstyle{plain}
\newtheorem{definition}{Definition}[section]
\newtheorem{theorem}{Theorem}[section]
\newtheorem{corollary}{Corollary}[section]
\newtheorem{proposition}{Proposition}[section]
\newtheorem{assumption}{Assumption}
\begin{document}

\begin{frontmatter}
\title{Plug-in Approach to Active Learning}
\runtitle{Plug-in Approach}

\begin{aug}
\author{\fnms{Stanislav} \snm{Minsker}\thanksref{t1,t2}\ead[label=e1]{sminsker@math.gatech.edu}}
\thankstext{t1}{Partially supported by ARC Fellowship, NSF Grants DMS-0906880 and CCF-0808863}
\thankstext{t2}{Mailing address:
686 Cherry street,
School of Mathematics,
Atlanta, GA 30332-0160}
\runauthor{S. Minsker}

\affiliation{Georgia Institute of Technology}

\printead{e1}\\
\end{aug}

\maketitle

\begin{abstract}
We present a new active learning algorithm based on nonparametric estimators of the regression function.
Our investigation provides probabilistic bounds for the rates of convergence of the generalization error achievable by proposed method over a broad class of underlying distributions.  
We also prove minimax lower bounds which show that the obtained rates are almost tight.
\end{abstract}

\begin{keyword}
Active learning, selective sampling, model selection, classification, confidence bands
\end{keyword}

\end{frontmatter}

\section{Introduction}
Let $(S,\m B)$ be a measurable space and
let $(X, Y)\in S\times\left\{-1,1\right\}$ be a random couple with unknown distribution $P$. 
The marginal distribution of the design variable $X$ will be denoted by $\Pi$. 
Let $\eta(x):=\mb E(Y|X=x)$ be the regression function. The goal of {\it binary classification} is to predict label $Y$ based on the observation $X$. 
Prediction is based on a {\it classifier} - a measurable function $f:S\mapsto\left\{-1,1\right\}$. 
The quality of a classifier is measured in terms of its generalization error, $R(f)=\Pr\left(Y\ne f(X)\right)$. 
In practice, the distribution $P$ remains unknown but the learning algorithm has access to the {\it training data} - the i.i.d. sample
$(X_i,Y_i), \ i=1\ldots n$ from $P$.
It often happens that the cost of obtaining the training data is associated with labeling the observations $X_i$ while the pool of observations itself is almost unlimited. 
This suggests to measure the performance of a learning algorithm in terms of its {\it label complexity}, the number of labels $Y_i$ required to obtain a classifier with the desired accuracy. 
{\it Active learning} theory is mainly devoted to design and analysis of the algorithms that can take advantage of this modified framework. 
Most of these procedures can be characterized by the following property: at each step $k$, observation $X_k$ is sampled from a distribution $\hat\Pi_k$ that depends on previously obtained
$(X_i,Y_i), \ i\leq k-1$(while passive learners obtain all available training data at the same time). 
$\hat\Pi_k$ is designed to be supported on a set where classification is difficult and requires more labeled data to be collected. 
The situation when active learners outperform passive algorithms might occur when the so-called {\it Tsybakov's low noise assumption} is satisfied: there exist constants $B,\gamma>0$ such that
\begin{equation}\label{noise1}
\forall \ t>0, \ \Pi(x: |\eta(x)|\leq t)\leq Bt^{\gamma}
\end{equation}
This assumption provides a convenient way to characterize the noise level of the problem and will play a crucial role in our investigation.\\
The topic of active learning is widely present in the literature; see \citet{balcan1}, \citet{hanneke2}, \citet{castro1} for review. 
It was discovered that in some cases the generalization error of a resulting classifier can converge to zero exponentially fast with respect to its label complexity(while the best rate for passive learning is usually polynomial with respect to the cardinality of the training data set). 
However, available algorithms that adapt to the unknown parameters of the problem($\gamma$ in Tsybakov's low noise assumption, regularity of the decision boundary) involve empirical risk minimization with binary loss, along with other computationally hard problems, see \citet{hanneke1}, \citet{hanneke2}. 
On the other hand, the algorithms that can be effectively implemented, as in \citet{castro1}, are not adaptive. \\
The majority of the previous work in the field was done under standard complexity assumptions on the set of possible classifiers(such as polynomial growth of the covering numbers). 
\citet{castro1} derived their results under the regularity conditions on the decision boundary and the noise assumption which is slightly more restrictive then (\ref{noise1}). 
Essentially, they proved that if the decision boundary is a graph of the H\"{o}lder smooth function $g\in \Sigma(\beta,K,[0,1]^{d-1})$
(see section \ref{preliminaries} for definitions) and 
the noise assumption is satisfied with $\gamma>0$,
then the minimax lower bound for the expected excess risk of the active classifier is of order 
$
C\cdot N^{-\frac{\beta(1+\gamma)}{2\beta+\gamma(d-1)}}
$
and the upper bound is 
$
C (N/\log N)^{-\frac{\beta(1+\gamma)}{2\beta+\gamma(d-1)}},
$
where $N$ is the label budget. However, the construction of the classifier that achieves an upper bound assumes  $\beta$ and $\gamma$ to be known. \\
In this paper, we consider the problem of active learning under classical nonparametric assumptions on the regression function - namely, we assume that it belongs to a certain H\"{o}lder class $\Sigma(\beta,K,[0,1]^d)$ and satisfies to the low noise condition (\ref{noise1}) with some positive $\gamma$. 
In this case, the work of \citet{tsyb2} showed that plug-in classifiers can attain optimal rates in the {\it passive} learning framework, namely, that the expected excess risk of a classifier 
$\hat g=\sign \hat\eta$ 
is bounded above by 
$CN^{-\frac{\beta(1+\gamma)}{2\beta+d}}$
(which is the optimal rate), 
where $\hat \eta$ is the local polynomial estimator of the regression function and $N$ is the size of the training data set.  
We were able to partially extend this claim to the case of active learning: 
first, we obtain minimax lower bounds for the excess risk of an active classifier in terms of its label complexity.
Second, we propose a new algorithm that is based on plug-in classifiers, attains almost optimal rates over a broad class of distributions and possesses adaptivity with respect to $\beta,\gamma$(within the certain range of these parameters). \\
The paper is organized as follows: the next section introduces remaining notations and specifies the main assumptions made throughout the paper.
This is followed by a qualitative description of our learning algorithm. 
The second part of the work contains the statements and proofs of our main results - minimax upper and lower bounds for the excess risk.   

\section{Preliminaries}\label{preliminaries}
Our {\it active learning} framework is governed by the following rules: 
\begin{enumerate}
\item Observations are sampled sequentially: $X_k$ is sampled from the modified distribution $\hat \Pi_k$ that depends on 
$(X_1,Y_1),\ldots,(X_{k-1},Y_{k-1})$.
\item $Y_k$ is sampled from the conditional distribution 
$P_{Y|X}(\cdot|X=x)$. 
Labels are conditionally independent given the feature vectors $X_i, \ i\leq n$.
\end{enumerate}
Usually, the distribution $\hat \Pi_k$ is supported on a set where classification is difficult.\\
Given the probability measure $\mb Q$ on $S\times\left\{-1,1\right\}$, we denote the integral with respect to this measure by $\mb Q g:=\int g d\,\mb Q$.
Let $\m F$ be a class of bounded, measurable functions. 
The risk and the excess risk of $f\in \m F$ with respect to the measure $\mb Q$ are defined by
\begin{align*}
&
R_{\mb Q}(f):=\mb Q \m I_{y\ne \sign f(x)} \\
&
\mathcal{E}_{\mb Q}(f):=R_{\mb Q}(f)-\inf\limits_{g\in \m F}R_{\mb Q}(g),
\end{align*}
where $\m I_{\m A}$ is the indicator of event $\m A$.
We will omit the subindex $\mb Q$ when the underlying measure is clear from the context. Recall that we denoted the distribution of $(X,Y)$ by $P$.
The minimal possible risk with respect to $P$ is  
$$
\ R^*=\inf\limits_{g:S\mapsto[-1,1]}\Pr\left(Y\ne \sign g(X)\right),
$$
where the infimum is taken over all measurable functions. 
It is well known that it is attained for any $g$ such that $\sign g(x)=\sign \eta(x)$ $\Pi$ - a.s. 
Given $g\in \m F, \ A\in \m B, \ \delta>0$, define 
$$
\m F_{\infty,A}(g;\delta):=\left\{f\in \m F: \ \|f-g\|_{\infty,A}\leq \delta\right\},
$$ 
where $\|f-g\|_{\infty,A}=\sup\limits_{x\in A}|f(x)-g(x)|$. For $A\in \m B$, define the function class 
$$
\m F|_{A}:=\left\{f|_{A}, \ f\in \m F\right\}
$$
where $f|_{A}(x):=f(x)I_A(x)$.
From now on, we restrict our attention to the case $S=[0,1]^d$. Let $K>0$. 
\begin{definition}
\label{holder3}
We say that $g:\mb R^d\mapsto\mb R$ belongs to $\Sigma(\beta,K,[0,1]^d)$, the $(\beta,K,[0,1]^d)$ - H\"{o}lder class of functions, 
if $g$ is $\lfloor\beta\rfloor$ times continuously differentiable and for all $x, x_1\in [0,1]^d$ satisfies
$$
|g(x_1)-T_x(x_1)|\leq K\|x-x_1\|_{\infty}^{\beta},
$$
where $T_x$ is the Taylor polynomial of degree $\lfloor\beta\rfloor$ of $g$ at the point $x$. 
\end{definition}
\begin{definition}
$\m P(\beta,\gamma)$ is the class of probability distributions on \\
 $\mb [0,1]^d\times\left\{-1,+1\right\}$ with the following properties:
\begin{enumerate}
 \item $\forall \ t>0, \ \Pi(x: |\eta(x)|\leq t)\leq Bt^{\gamma}$;
\item $\eta(x)\in \Sigma(\beta,K,\mb [0,1]^d)$.
\end{enumerate}
\end{definition}
We do not mention the dependence of $\m P(\beta,\gamma)$ on the fixed constants $B,K$ explicitly, but this should not cause any uncertainty.\\ 
Finally, let us define $\m P_U^*(\beta,\gamma)$ and  $\m P_U(\beta,\gamma)$, the subclasses of $\m P(\beta,\gamma)$, 
by imposing two additional assumptions. 
Along with the formal descriptions of these assumptions,
 we shall try to provide some motivation behind them. The first deals with the marginal $\Pi$. 
For an integer $M\geq 1$, let 
$$
\m G_{M}:=\left\{\left(\frac{k_1}{M},\ldots,\frac{k_d}{M}\right), \ k_i=1\ldots M, \ i=1\ldots d\right\}
$$
be the regular grid on the unit cube $[0,1]^d$ with mesh size $M^{-1}$. 
It naturally defines a partition into a set of $M^d$ open cubes $R_i, \ i=1\ldots M^d$ with edges of length $M^{-1}$ and vertices in $\m G_{M}$.
Below, we consider the nested sequence of grids $\left\{\m G_{2^{m}}, \ m\geq 1\right\}$ and corresponding dyadic partitions of the unit cube. 
\begin{definition}
We will say that $\Pi$ is $(u_1,u_2)$-regular with respect to $\left\{\m G_{2^{m}}\right\}$ if for any $m\geq 1$, any element of the partition 
$R_i, \ i\leq 2^{dm}$ such that $R_i\cap {\rm supp}(\Pi)\ne \emptyset$, 
we have  
\begin{equation}\label{t-reg}
u_1\cdot 2^{-dm}\leq \Pi\left(R_i\right)\leq u_2\cdot 2^{-dm}.
\end{equation}
where $0<u_1\leq u_2<\infty$.
\end{definition}
\begin{assumption}\label{regular}
$\Pi$ is $(u_1,u_2)$ - regular.
\end{assumption}
In particular, $(u_1,u_2)$-regularity holds for the distribution with a density $p$ on $[0,1]^d$ such that $0<u_1\leq p(x)\leq u_2<\infty$.\\
Let us mention that our definition of regularity is of rather technical nature; 
for most of the paper, the reader might think of $\Pi$ as being uniform on $[0,1]^d$( 
however, we need slightly more complicated marginal to construct the minimax lower bounds for the excess risk). 
It is know that estimation of regression function in sup-norm is sensitive to the geometry of design distribution, mainly because the quality of estimation depends on the {\it local} amount of data at every point; 
conditions similar to our {\it assumption} \ref{regular} were used in the previous works where this problem appeared, e.g., {\it strong density assumption} in \citet{tsyb2} and {\it assumption D} in \citet{gaiffas1}.\\
Another useful characteristic of $(u_1,u_2)$ - regular distribution $\Pi$ is that this property is stable with respect to restrictions of $\Pi$ to certain subsets of its support. 
This fact fits the active learning framework particularly  well.
\begin{definition}
We say that $\mb Q$ belongs to $\m P_U(\beta,\gamma)$ if $\mb Q\in \m P(\beta,\gamma)$ and {\it assumption} \ref{regular} is satisfied for some $u_1, u_2$.
\end{definition}
The second assumption is crucial in derivation of the upper bounds. 
The space of piecewise-constant functions which is used to construct the estimators of $\eta(x)$ is defined via
$$
\m F_m=\left\{\sum\limits_{i=1}^{2^{dm}} \lambda_i I_{R_i}(\cdot): \ |\lambda_i|\leq 1,
\ i=1\ldots 2^{dm}\right\},
$$ 
where $\left\{R_i\right\}_{i=1}^{2^{dm}}$ forms the dyadic partition of the unit cube. 
Note that $\m F_m$ can be viewed as a $\|\cdot\|_{\infty}$-unit ball in the linear span of first $2^{dm}$ Haar basis functions in $[0,1]^d$. 
Moreover, $\left\{\m F_m, \ m\geq 1\right\}$ is a nested family, which is a desirable property for the model selection procedures.
By $\bar\eta_m(x)$ we denote the $L_2(\Pi)$ - projection of the regression function onto $\m F_m$.\\
We will say that the set $A\subset [0,1]^d$ {\it approximates the decision boundary} $\left\{x:\eta(x)=0\right\}$ if there exists $t>0$ such that 
\begin{equation}\label{approx}
\left\{x:|\eta(x)|\leq t\right\}_{\Pi}\subseteq A_{\Pi}\subseteq \left\{x:|\eta(x)|\leq3t\right\}_{\Pi},
\end{equation}
where for any set $A$ we define $A_{\Pi}:=A\cap \supp(\Pi)$.
The most important example we have in mind is the following: 
let $\hat \eta$ be some estimator of $\eta$ with
$
\|\hat \eta-\eta\|_{\infty,\supp(\Pi)}\leq t,
$
and define the $2t$ - band around $\eta$ by 
$$
\hat F=\left\{f: \  \hat \eta(x)-2t\leq f(x)\leq \hat \eta(x)+2t \ \forall x\in[0,1]^d\right\}
$$
Take
$
A=\left\{x: \ \exists f_1,f_2\in \hat F \text{ s.t. }\sign f_1(x)\ne\sign f_2(x)\right\}
$,
then it is easy to see that $A$ satisfies (\ref{approx}). 
Modified design distributions used by our algorithm are supported on the sets with similar structure.\\
Let $\sigma(\m F_m)$ be the sigma-algebra generated by $\m F_m$ and $A\in \sigma(\m F_m)$. 
\begin{assumption} \label{holder2}
There exists $B_2>0$ such that for all $m\geq 1$, $A\in \sigma(\m F_m)$ satisfying (\ref{approx}) and such that $A_{\Pi}\ne\emptyset$ 
the following holds true:
\begin{align*}
& 
 \int\limits_{[0,1]^d}\left(\eta-\bar\eta_m\right)^2 \Pi(dx|x\in A_{\Pi})
 \geq B_2 \|\eta-\bar\eta_m\|^2_{\infty,A_{\Pi}}
\end{align*} 
\end{assumption}  
Appearance of {\it assumption \ref{holder2}} is motivated by the structure of our learning algorithm - namely, 
it is based on adaptive confidence bands for the regression function. 
Nonparametric confidence bands is a big topic in statistical literature, and the review of this subject is not our goal. 
We just mention that it is impossible to construct adaptive confidence bands of optimal size over the whole 
$\bigcup\limits_{\beta\leq 1}\Sigma\left(\beta, K,[0,1]^d\right)$. \citet{low1,nickl2} discuss the subject in details. 
However, it is possible to construct adaptive $L_2$ - confidence balls(see an example following Theorem 6.1 in \citet{kolt6}). 
For functions satisfying {\it assumption \ref{holder2}}, 
this fact allows to obtain confidence bands of desired size. 
In particular, 
\begin{enumerate}[(a)]
\item
functions that are differentiable, with gradient being bounded away from 0 in the vicinity of decision boundary;
\item Lipschitz continuous functions that are convex in the vicinity of decision boundary
\end{enumerate}
satisfy {\it assumption} \ref{holder2}.
For precise statements, see Propositions \ref{holder2:example1}, \ref{holder2:example2} in Appendix \ref{examples}. 
A different approach to adaptive confidence bands in case of one-dimensional density estimation is presented in \citet{nickl1}.
Finally, we define $\m P_U^*(\beta,\gamma)$:
\begin{definition}
We say that $\mb Q$ belongs to $\m P_U^*(\beta,\gamma)$ if $\mb Q\in \m P_U(\beta,\gamma)$ and {\it assumption} \ref{holder2} is satisfied for some $B_2>0$.
\end{definition}
\subsection{Learning algorithm}
Now we give a brief description of the algorithm, since several definitions appear naturally in this context. 
First, let us emphasize that {\it the marginal distribution $\Pi$ is assumed to be known to the learner.} 
This is not a restriction, since we are not limited in the use of unlabeled data and $\Pi$ can be estimated to any desired accuracy. 
Our construction is based on so-called {\it plug-in} classifiers of the form 
$\hat f(\cdot)=\sign \hat \eta(\cdot)$, 
where $\hat\eta$ is a piecewise-constant estimator of the regression function. 
As we have already mentioned above, it was shown in \citet{tsyb2} that in the passive learning framework plug-in classifiers attain optimal rate for the excess risk of order $N^{-\frac{\beta(1+\gamma)}{2\beta+d}}$, 
with $\hat \eta$ being the local polynomial estimator.
\par
Our active learning algorithm iteratively improves the classifier by constructing shrinking confidence bands for the regression function. 
On every step $k$, the piecewise-constant estimator $\hat\eta_k$ is obtained via the model selection procedure 
which allows adaptation to the unknown smoothness(for H\"{o}lder exponent $\leq1$). 
The estimator is further used to construct a confidence band $\hat {\m F}_k$ for $\eta(x)$. 
The {\it active set} assosiated with $\hat{\m F}_k$ is defined as
$$
\hat A_k=A(\hat {\m F}_k):=\left\{x\in {\supp(\Pi)}: \ \exists f_1,f_2\in \hat {\m F}_k, \sign f_1(x)\ne\sign f_2(x)\right\}
$$ 
Clearly, this is the set where the confidence band crosses zero level and where classification is potentially difficult. 
$\hat A_k$ serves as a support of the modified distribution $\hat \Pi_{k+1}$: on step $k+1$, label $Y$ is requested only for observations $X\in \hat{A}_k$, 
forcing the labeled data to concentrate in the domain where higher precision is needed. 
This allows one to obtain a tighter confidence band for the regression function restricted to the active set. 
Since $\hat A_k$ approaches the decision boundary, its size is controlled by the low noise assumption. 
The algorithm does not require a priori knowledge of the noise and regularity parameters, 
being adaptive for $\gamma>0, \beta\leq 1$.
\begin{figure}[ht]
\centering
\label{fig}
\subfigure{\label{fig:1}\includegraphics[scale=0.31]{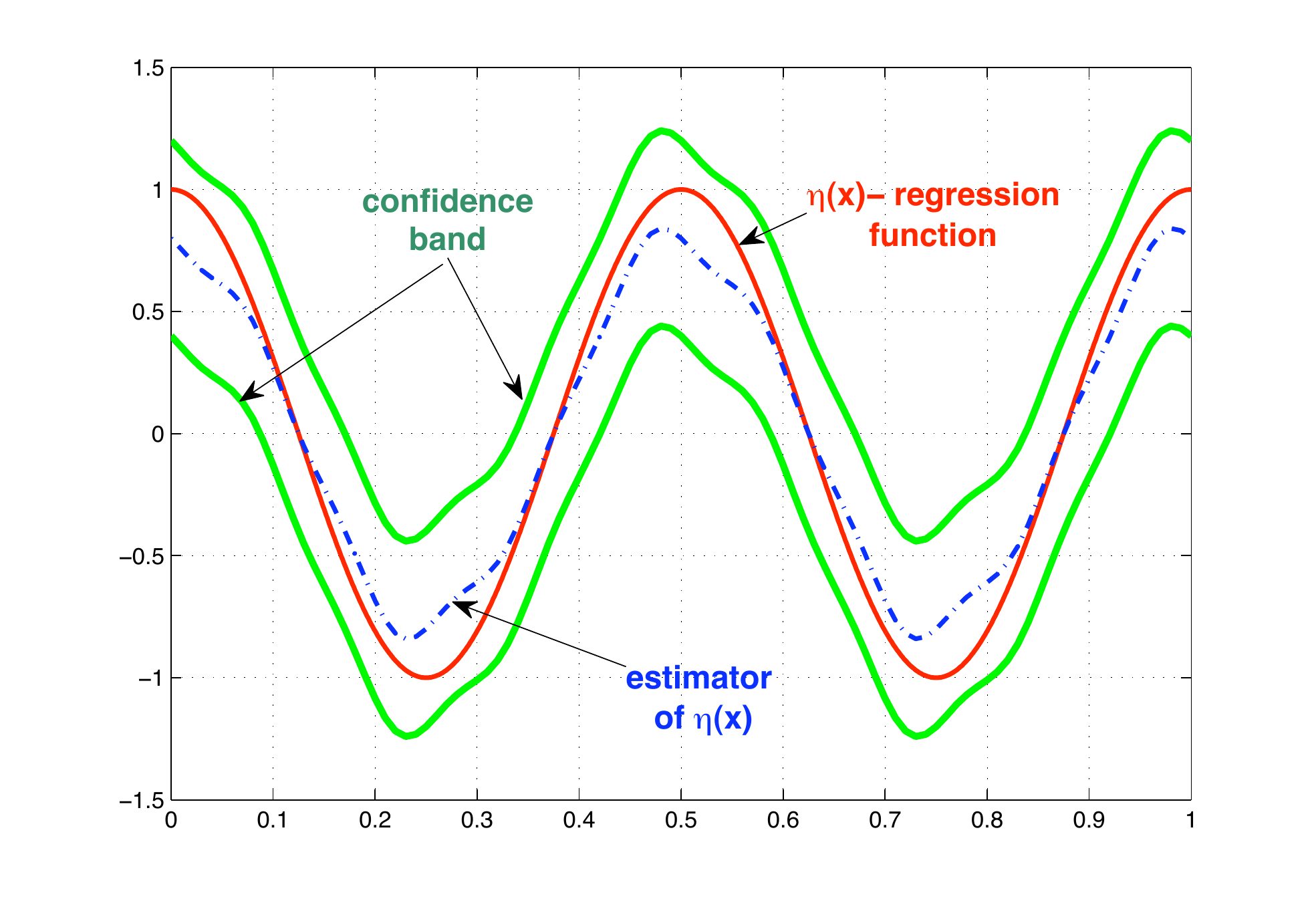}}
\subfigure{\label{fig:2}\includegraphics[scale=0.30]{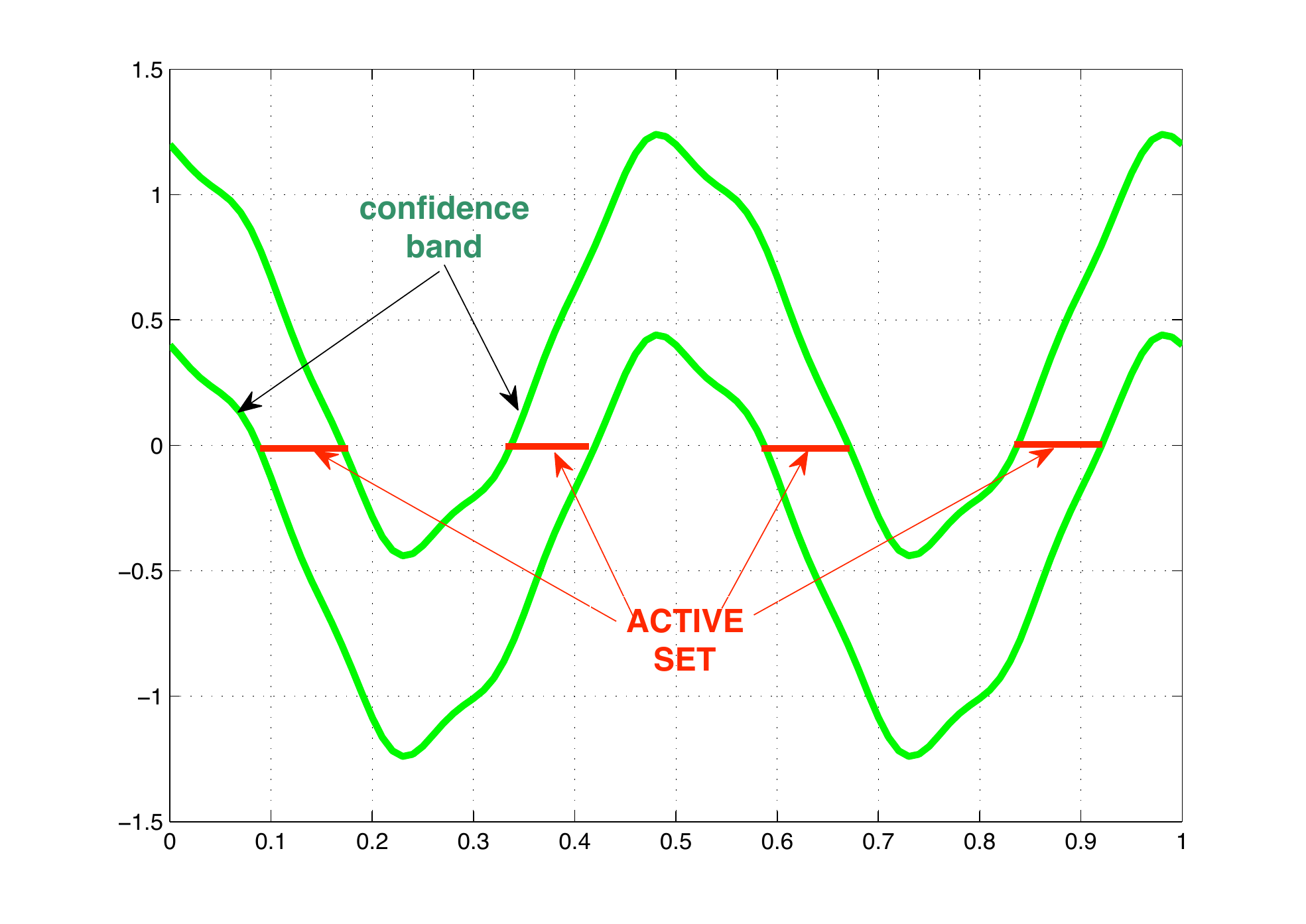}}
\caption{Active Learning Algorithm}
\end{figure}
Further details are given in section \ref{upper}.
\subsection{Comparison inequalities} Before proceeding to the main results, let us recall the well-known connections between the binary risk and the $\|~\cdot~\|_{\infty}$, 
$\|\cdot~\|_{L_2(\Pi)}$ - norm risks:
\begin{proposition}\label{risk_bound}
Under the low noise assumption,
\begin{align}
&\label{sup}
R_{P}(f)-R^*\leq D_1\|(f-\eta)\m I\left\{\sign f\ne \sign \eta\right\}\|_{\infty}^{1+\gamma}; \\
&\label{square}
R_{P}(f)-R^*\leq D_2\|(f-\eta)\m I\left\{\sign f\ne \sign \eta\right\}\|_{L_2(\Pi)}^{\frac{2(1+\gamma)}{2+\gamma}}; \\
&\label{noise}
R_P(f)-R^*\geq D_3\Pi(\sign f\ne \sign \eta)^{\frac{1+\gamma}{\gamma}}
\end{align}
\end{proposition}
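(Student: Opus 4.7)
The plan is to start from the well-known pointwise representation of the excess classification risk,
\[
R_P(f)-R^* = \int_{[0,1]^d} |\eta(x)|\, \m I\{\sign f(x)\ne \sign \eta(x)\}\, \Pi(dx),
\]
which follows from the fact that the Bayes classifier is $\sign \eta$ together with a direct calculation of the conditional risk. A second observation I will use repeatedly is purely pointwise: whenever $\sign f(x)\ne \sign \eta(x)$, the values $f(x)$ and $\eta(x)$ lie on opposite sides of zero (or one of them is zero), so
\[
|\eta(x)|\, \m I\{\sign f(x)\ne \sign \eta(x)\} \le |f(x)-\eta(x)|\, \m I\{\sign f(x)\ne \sign \eta(x)\}.
\]
Both inequalities (\ref{sup}) and (\ref{square}) then come out of the same ``peeling at a threshold $t$'' trick, which is the standard device for converting Tsybakov's noise condition into an improved rate.

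For (\ref{sup}) I set $\delta := \|(f-\eta)\m I\{\sign f\ne \sign \eta\}\|_\infty$ and split the integration domain into $\{|\eta|\le \delta\}$ and $\{|\eta|>\delta\}$. On the first piece I bound $|\eta|\le \delta$ and use (\ref{noise1}) to bound the $\Pi$-measure by $B\delta^\gamma$, giving $B\delta^{1+\gamma}$. On the second piece I use the pointwise bound above: $|\eta|\le |f-\eta|\le \delta$ on the disagreement set, which contradicts $|\eta|>\delta$, so that integral vanishes. This yields (\ref{sup}) with $D_1=B$.

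For (\ref{square}) I again split at a free threshold $t>0$. The piece on $\{|\eta|\le t\}$ is bounded by $Bt^{1+\gamma}$ as before. On $\{|\eta|>t\}$ I write $|\eta|\le \eta^2/t$ and then invoke $\eta^2\le (f-\eta)^2$ on the sign-disagreement set, which gives $t^{-1}\|(f-\eta)\m I\{\sign f\ne \sign \eta\}\|_{L_2(\Pi)}^2$. Adding the two terms and optimizing in $t$ (setting $t^{2+\gamma}$ proportional to the squared $L_2$ norm) produces the exponent $2(1+\gamma)/(2+\gamma)$ and therefore (\ref{square}).

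For the lower bound (\ref{noise}) I let $S=\{\sign f\ne \sign \eta\}$ and keep only the part of the integral where $|\eta|>t$:
\[
R_P(f)-R^* \ge t\, \Pi\bigl(S\cap\{|\eta|>t\}\bigr) \ge t\bigl(\Pi(S)-Bt^\gamma\bigr).
\]
Choosing $t$ so that $Bt^\gamma = \tfrac{1}{2}\Pi(S)$, i.e. $t\propto \Pi(S)^{1/\gamma}$, maximizes (up to constants) the right-hand side and produces $D_3 \Pi(S)^{(1+\gamma)/\gamma}$ with an explicit $D_3$ depending only on $B$ and $\gamma$. The only real subtlety across the three proofs is the choice of the threshold $t$ in each peeling argument; once it is tuned correctly everything else reduces to applying (\ref{noise1}) and the pointwise comparison between $|\eta|$ and $|f-\eta|$.
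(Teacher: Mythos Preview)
Your argument is correct in all three parts, and it is essentially the standard proof one finds in the cited references: the paper itself does not give a proof but simply points to \citet{tsyb2}, Lemmas~5.1 and~5.2, for (\ref{sup}) and (\ref{square}), and to \citet{kolt6}, Lemma~5.2, for (\ref{noise}). Your peeling-at-threshold derivations reproduce exactly those arguments, so there is nothing to compare and nothing to fix.
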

\begin{proof}
For (\ref{sup}) and (\ref{square}), see \citet{tsyb2}, lemmas 5.1, 5.2 respectively, and for (\ref{noise})---\citet{kolt6}, lemma 5.2.
\end{proof}

\section{Main results}

The question we address below is: what are the best possible rates that can be achieved by active algorithms in our framework and how these rates can be attained.

\subsection{Minimax lower bounds for the excess risk}

The goal of this section is to prove that for $P\in \m P(\beta,\gamma)$ 
no active learner can output a classifier with expected excess risk converging to zero faster than $N^{-\frac{\beta(1+\gamma)}{2\beta+d-\beta\gamma}}$. 
Our result builds upon the minimax bounds of \citet{tsyb2}, \citet{castro1}. \\
{\bf Remark}
The theorem below is proved for a smaller class $\m P_U^*(\beta,\gamma)$, which implies the result for $\m P(\beta,\gamma)$.
\begin{theorem}\label{lower_bound}
Let $\beta,\gamma,d$ be such that $\beta\gamma\leq d$. Then there exists $C>0$ such that for all $n$ large enough and for any active classifier $\hat f_n(x)$ we have
$$
\sup_{P\in \m{P}_U^*(\beta,\gamma)}\mb E R_P(\hat f_n)-R^*\geq C N^{-\frac{\beta(1+\gamma)}{2\beta+d-\beta\gamma}}
$$ 
\end{theorem}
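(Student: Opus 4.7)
The proof will follow an Assouad-type multiple-hypothesis reduction in the spirit of \citet{tsyb2} (passive plug-in lower bound) and \citet{castro1} (active lower bound for smooth boundaries), modified so that the constructed hypotheses actually lie in the restricted class $\m P_U^*(\beta,\gamma)$. Throughout, let $h>0$ be a resolution parameter to be optimized at the end.

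\emph{Step 1: Hypothesis family.} Pick a dyadic-aligned collection of disjoint cubes $Q_1,\ldots,Q_m\subset[0,1]^d$ of side $h$, and let $\phi\in\Sigma(\beta,K',[0,1]^d)$ be a fixed nonnegative bump supported in the unit cube with $\int\phi>0$. For each $\sigma\in\{-1,+1\}^m$ set
$$
\eta_\sigma(x)=\sum_{i=1}^m\sigma_i\,h^\beta\,\phi\!\left(\tfrac{x-x_i}{h}\right)\quad\text{on }\ \cup_i Q_i,
$$
and extend $\eta_\sigma$ by a fixed deterministic profile bounded away from zero on the complement. Choose $\Pi$ to be $(u_1,u_2)$-regular on $[0,1]^d$.

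\emph{Step 2: Class membership.} Hölder smoothness of $\eta_\sigma$ follows from disjointness of the bump supports for $K$ large enough. Saturating (\ref{noise1}) at $t\asymp h^\beta$ forces $\Pi(\cup_i Q_i)\lesssim h^{\beta\gamma}$, i.e.\ $m h^d\lesssim h^{\beta\gamma}$; we therefore take $m\asymp h^{\beta\gamma-d}$, which grows as $h\downarrow 0$ precisely because of the standing assumption $\beta\gamma\leq d$. Assumption~\ref{holder2} is verified by selecting $\phi$ so that the centered bump $\phi-\bar\phi$ has non-degenerate $L_2/L_\infty$ ratio on the dyadic cells at the scale of the bumps.

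\emph{Step 3: Information bound.} Let $\m S_N=(X_i,Y_i)_{i=1}^N$ be the transcript produced by an active strategy run on $P_\sigma$, and let $N_i$ be the random number of queries falling in $Q_i$. For $\sigma^{(i)}$ the flip of $\sigma$ at coordinate $i$, the standard chain-rule computation for sequential samplers together with the pointwise estimate $KL\bigl(\tfrac{1+\eta_\sigma(x)}{2}\big\|\tfrac{1-\eta_\sigma(x)}{2}\bigr)\leq C_1 h^{2\beta}$ on $Q_i$ yields
$$
KL\!\bigl(P_\sigma^{\m S_N}\,\big\|\,P_{\sigma^{(i)}}^{\m S_N}\bigr)\leq C_1\,h^{2\beta}\,\mb E_\sigma[N_i].
$$

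\emph{Step 4: Assouad's lemma.} Since $|\eta_\sigma|\asymp h^\beta$ on a constant fraction of $Q_i$ and $\Pi(Q_i)\asymp h^d$,
$$
R_{P_\sigma}(\hat f_N)-R^*\geq c_0\,h^{\beta+d}\cdot\#\bigl\{i:\sign\hat f_N\ne\sigma_i\text{ on }Q_i\bigr\}.
$$
Averaging uniformly over $\sigma\in\{\pm1\}^m$, Assouad's lemma and Bretagnolle--Huber combined with Step 3 give
$$
\frac{1}{2^m}\sum_\sigma\mb E_\sigma\bigl[R_{P_\sigma}(\hat f_N)-R^*\bigr]\geq c_1 h^{\beta+d}\sum_{i=1}^m\Bigl(1-\sqrt{C_1 h^{2\beta}\bar N_i/2}\Bigr),
$$
where $\bar N_i=2^{-m}\sum_\sigma\mb E_\sigma[N_i]$ satisfies $\sum_i\bar N_i\leq N$. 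Pigeonhole then ensures $\bar N_i\leq 2N/m$ for at least $m/2$ cells, and under the constraint $Nh^{2\beta}/m\leq c_2$ a positive fraction of the summands is bounded below by a constant.

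\emph{Step 5: Optimization.} Substituting $m\asymp h^{\beta\gamma-d}$, the constraint becomes $Nh^{2\beta+d-\beta\gamma}\leq c_2$; choosing $h\asymp N^{-1/(2\beta+d-\beta\gamma)}$ saturates it and gives
$$
\sup_\sigma\mb E_\sigma\bigl[R_{P_\sigma}(\hat f_N)-R^*\bigr]\gtrsim m h^{\beta+d}=h^{\beta(1+\gamma)}\asymp N^{-\frac{\beta(1+\gamma)}{2\beta+d-\beta\gamma}},
$$
which is the claimed bound.

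\textbf{Main obstacle.} The principal technical hurdle is constructing the bump profile $\phi$ and the marginal $\Pi$ so that Assumption~\ref{holder2} holds for every hypothesis in the family: we need $\|\eta_\sigma-\bar\eta_m\|_{L_2(\Pi|_{A_\Pi})}$ comparable to $\|\eta_\sigma-\bar\eta_m\|_{\infty,A_\Pi}$ uniformly over all dyadic-measurable sets $A$ approximating $\{\eta_\sigma=0\}$ in the sense of (\ref{approx}). This forces precise alignment of the bump supports with the dyadic grid and a profile whose centered version does not concentrate on low-measure subcells. A secondary but genuine technical point is handling the random, adaptively chosen $N_i$ in Step~4; the uniform averaging over $\sigma$ reduces it to the deterministic budget $\sum_i\mb E[N_i]\leq N$, but this averaging is what crucially distinguishes the active from the passive lower bound and must be executed carefully.
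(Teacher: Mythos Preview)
Your proposal is correct and reaches the same rate, but via a different reduction from the paper's. The paper does not use Assouad: it invokes Tsybakov's general lower-bound scheme (Theorem~2.5 in \citet{tsyb3}) on a Gilbert--Varshamov packing of the hypercube, bounds each KL divergence to the reference hypothesis by the crude inequality $KL(P_{\sigma,N}\|P_{\sigma_0,N})\le CN\max_x(\eta_\sigma-\eta_{\sigma_0})^2\le CNq^{-2\beta}$ (which ignores where the queries land), and then passes from a lower bound on $\Pi(\sign\hat f_N\ne\sign\eta)$ to the excess risk via inequality~(\ref{noise}). Your Assouad route instead compares single-coordinate flips, uses the localized estimate $KL\le Ch^{2\beta}\,\mb E_\sigma[N_i]$ together with the averaged budget $\sum_i\bar N_i\le N$, and lower-bounds the excess risk directly cell by cell. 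Both arguments arrive at the same constraint $Nh^{2\beta}\lesssim m\asymp h^{\beta\gamma-d}$: in the paper the Gilbert--Varshamov separation $\rho\ge m/8$ supplies the factor $m$ that your pigeonhole step produces, so neither is sharper here, though your per-cell accounting is the more portable technique for sequential problems. On Assumption~\ref{holder2} you correctly flag the obstacle; the paper's solution is not to tune $\phi$ but to support $\Pi$ only on thin dyadic annuli around the bump centers (plus a far-away region $A_0$), so that $\|\nabla\eta_\sigma\|$ is bounded away from zero on $\supp(\Pi)$ and Proposition~\ref{holder2:example1} applies directly. This construction is worth adopting, since a full-support marginal would place mass at the bump maxima where the gradient vanishes and Assumption~\ref{holder2} would then fail.
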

\begin{proof}
We proceed by constructing the appropriate family of classifiers $f_{\sigma}(x)=\sign \eta_{\sigma}(x)$, in a way similar to Theorem 3.5 in \citet{tsyb2}, and then apply Theorem 2.5 from \citet{tsyb3}. We present it below for reader's convenience.
\begin{theorem}
\label{tsyb2.5}
Let $\Sigma$ be a class of models, ${\rm d}: \Sigma\times \Sigma\mapsto \mb R$ - the pseudometric and $\left\{P_{f}, \ f\in \Sigma\right\}$ - a collection of probability measures associated with $\Sigma$. 
Assume there exists a subset  $\left\{f_{0},\ldots,f_{M}\right\}$ of $\Sigma$ such that 
\begin{enumerate}
\item $d(f_{i},f_{j})\geq 2s>0 \ \forall 0\leq i<j\leq M$
\item  $P_{f_{j}}\ll P_{f_{0}}$ for every $1\leq j\leq M$
\item $\frac 1 M \sum_{j=1}^{M}{\rm KL}(P_{f_{j}},P_{f_{0}})\leq \alpha \log M, \quad 0<\alpha<\frac 18$
\end{enumerate}
Then 
$$
\inf_{\hat f}\sup_{f\in \Sigma}P_{f}\left({\rm d}(\hat f,f)\geq s\right)\geq
 \frac{\sqrt{M}}{1+\sqrt{M}}\left(1-2\alpha-\sqrt{\frac{2\alpha}{\log M}}\right)
$$
where the infimum is taken over all possible estimators of $f$ based on a sample from $P_{f}$ 
and ${\rm KL}(\cdot,\cdot)$ is the Kullback-Leibler divergence.
\end{theorem} 
Going back to the proof, let $q=2^{l}, \ l\geq 1$ and
$$
G_q:=\left\{\left(\frac{2k_1-1}{2q},\ldots,\frac{2k_d-1}{2q}\right), \ k_i=1\ldots q, \ i=1\ldots d\right\}
$$
be the grid on $[0,1]^d$. For $x\in [0,1]^d$, let 
\begin{align*}
n_q(x)&=\argmin\left\{\|x-x_k\|_2: \ x_k\in G_q\right\}
\end{align*}
If $n_q(x)$ is not unique, we choose the one with smallest $\|\cdot\|_2$ norm.
The unit cube is partitioned with respect to $G_q$ as follows: $x_1,x_2$ belong to the same subset if
$n_q(x_1)=n_q(x_2)$. Let $'\succ'$ be some order on the elements of $G_q$ such that $x\succ y$ implies $\|x\|_{2}\geq \|y\|_{2}$. 
Assume that the elements of the partition are enumerated with respect to the order of their centers induced by $'\succ'$: $[0,1]^d=\bigcup\limits_{i=1}^{q^d}R_i$.
Fix $1\leq m\leq q^d$ and let 
\begin{align*}
&
S:=\bigcup_{i=1}^m R_i
\end{align*}
Note that the partition is ordered in such a way that there always exists $1\leq k\leq q\sqrt{d}$ with 
\begin{equation}
\label{cube}
B_+\left(0,\frac{k}{q}\right)\subseteq S\subseteq B_+\left(0,\frac{k+3\sqrt{d}}{q}\right),
\end{equation}
where $B_+(0,R):=\left\{x\in \mb R^d_+: \ \|x\|_2\leq R \right\}$. 
In other words, (\ref{cube}) means that that the difference between the radii of inscribed and circumscribed spherical sectors of $S$ is of order $C(d) q^{-1}$.\\
Let $v>r_1>r_2$ be three integers satisfying 
\begin{equation}
\label{constants}
2^{-v}<2^{-r_1}<2^{-r_1}\sqrt{d}<2^{-r_2}\sqrt{d}<2^{-1}
\end{equation}
Define $u(x):\mb R\mapsto \mb R_+$ by
\begin{equation}
\label{smooth}
u(x):=\frac{\int_x^{\infty}U(t)dt}{\int\limits_{2^{-v}}^{1/2} U(t)dt} 
\end{equation}
where
$$
U(t):=\left\{\begin{array}{c l}
\exp\left(-\frac{1}{(1/2-x)(x-2^{-v})}\right), & x\in(2^{-v},\frac12)\\
0 & \text{else.} \\
\end{array}\right.
$$
Note that $u(x)$ is an infinitely diffferentiable function such that 
$u(x)=1, \ x\in [0,2^{-v}]$ and $u(x)=0, \ x\geq \frac12$. 
Finally, for 
$x\in \mb R^d$ let 
$$
\Phi(x):=C u(\|x\|_{2})
$$ 
where $C:=C_{L,\beta}$ is chosen such that $\Phi\in \Sigma(\beta, L, \mb R^d)$.  \\ 
Let 
$
r_S:=\inf\left\{r>0: \ B_+(0,r)\supseteq S\right\}
$ and
$$
A_0:=\left\{\bigcup\limits_{i}R_i: \ R_i\cap B_+\left(0,r_S+q^{-\frac{\beta\gamma}{d}}\right)=\emptyset\right\}
$$
\begin{figure}[t]
\label{minmax}
\begin{center}
\includegraphics[width=0.33\textwidth]{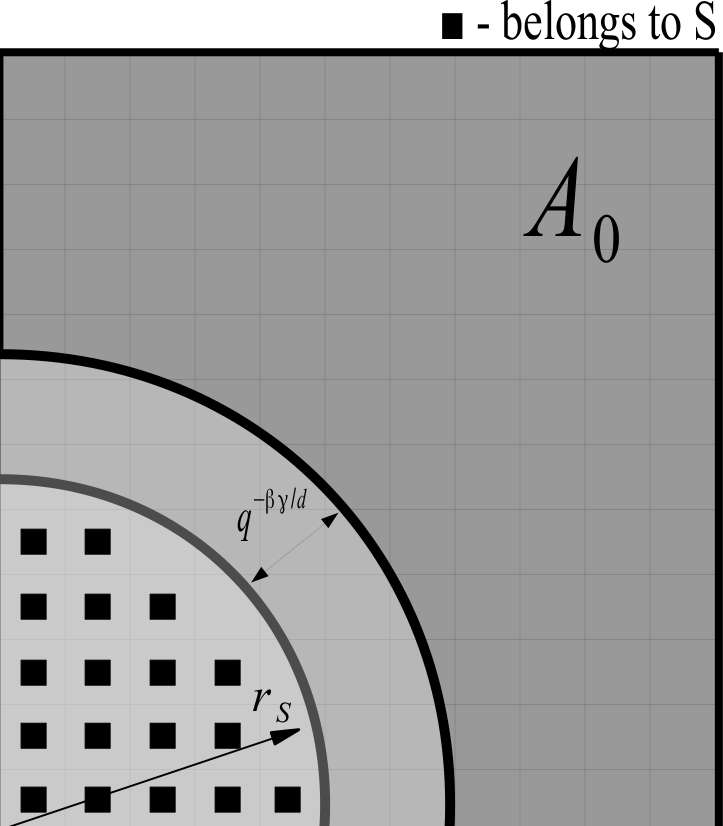}
\end{center}
\caption{Geometry of the support}
\end{figure}
Note that 
\begin{equation}
\label{radius}
r_S\leq c\frac{m^{1/d}}{q},
\end{equation} 
since ${\rm Vol}\, (S)=mq^{-d}$.
\\
Define 
$
\m H_m=\left\{P_{\sigma}: \sigma\in\left\{-1,1\right\}^m\right\}
$
to be the hypercube of probability distributions on 
$[0,1]^d\times \left\{-1,+1\right\}$. 
The marginal distribution $\Pi$ of $X$ is independent of $\sigma$: define its density $p$ by
$$
p(x)=\left\{\begin{array}{c l}
\frac{2^{d(r_1-1)}}{2^{d(r_1-r_2)}-1}, & \ x\in B_{\infty}\left(z,\frac{2^{-r_2}}{q}\right)\setminus B_{\infty}\left(z,\frac{2^{-r_1}}{q}\right), \ z\in G_q\cap S, \\
c_0, & x\in A_0, \\
0 & \text{else}.             
            \end{array}\right.
$$
where $B_{\infty}(z,r):=\left\{x: \ \|x-z\|_{\infty}\leq r\right\}$, $c_0:=\frac{1-mq^{-d}}{{\rm Vol}(A_0)}$(note that $\Pi(R_i)=q^{-d}\quad\forall i\leq m$) and $r_1, r_2$ are defined in (\ref{constants}). 
In particular, $\Pi$ satisfies {\it assumption} {\ref{regular}} since it is supported on the union of dyadic cubes and has bounded above and below on $\supp(\Pi)$ density.

Let 
$$
\Psi(x):=u\left(1/2-q^{\frac{\beta\gamma}{d}}\dist_2(x,B_+(0,r_S))\right),
$$
where $u(\cdot)$ is defined in (\ref{smooth}) and 
$\dist_2(x,A):=\inf\left\{\|x-y\|_2, \ y\in A\right\}$.\\
Finally, the regression function  
$\eta_{\sigma}(x)=\mb E_{P_{\sigma}}(Y|X=x)$ 
is defined via
$$
\eta_{\sigma}(x):=
\left\{\begin{array}{l l}
\sigma_i q^{-\beta}\Phi(q[x-n_q(x)]), & x\in R_i, \ 1\leq i\leq m \\
\frac{1}{C_{L,\beta}\sqrt{d}}\,
\dist_2(x,B_+(0,r_S))^{\frac{d}{\gamma}}\cdot \Psi(x),
 & x\in [0,1]^d\setminus S.               
\end{array}
\right.
$$
The graph of $\eta_{\sigma}$ is a surface consisting of small ''bumps" spread around $S$ 
and tending away from 0 monotonically with respect to 
$\dist_{2}(\cdot,B_+(0,r_S))$ on $[0,1]^d\setminus S$. 
Clearly, $\eta_{\sigma}(x)$ satisfies smoothness requirement, since for $x\in[0,1]^d$
$$
\dist_2(x,B_+(0,r_S))=\|x\|_2-r_S
$$ 
and $\frac{d}{\gamma}\geq \beta$ by assumption.
\footnote{$\Psi(x)$ can be replaced by 1 unless $\beta\gamma=d$ and $\beta$ is an integer, in which case extra smoothness at the boundary of $B_+(0,r_S)$, provided by $\Psi$, is necessary.}
Let's check that it also satisfies the low noise condition.
Since $|\eta_{\sigma}|\geq Cq^{-\beta}$ on support of $\Pi$, it is enough to consider 
$t=Czq^{-\beta}$ for $z>1$:
\begin{align*}
\Pi(|\eta_{\sigma}(x)|\leq Czq^{-\beta})&\leq 
mq^{-d}+\Pi\left(\dist_{2}(x,B_+(0,r_S))\leq Cz^{\gamma/d}q^{-\frac{\beta\gamma}{d}}\right)\leq \\
&
\leq mq^{-d}+C_2\left(r_S+Cz^{\gamma/d}q^{-\frac{\beta\gamma}{d}}\right)^d\leq  \\
&
\leq mq^{-d}+C_3 mq^{-d}+C_4 z^{\gamma}q^{-\beta\gamma}\leq \\
&
\leq \widehat C t^{\gamma},
\end{align*}
if $mq^{-d}=O(q^{-\beta\gamma})$. 
Here, the first inequality follows from considering $\eta_{\sigma}$ on $S$ and $A_0$ 
separately, 
and second inequality follows from (\ref{radius}) and direct computation of the sphere volume.
\\
Finally, $\eta_{\sigma}$ satisfies {\it assumption} \ref{holder2} with some $B_2:=B_2(q)$ since on $\supp(\Pi)$
$$
0<c_1(q)\leq \|\nabla \eta_{\sigma}(x)\|_2\leq c_2(q)<\infty
$$ 
The next step in the proof is to choose the subset of $\m H$ which is ``well-separated'': this can be done due to the following fact(see \citet{tsyb3}, Lemma 2.9): 
\begin{proposition}[Gilbert-Varshamov]
For $m\geq 8$, there exists 
$$
\left\{\sigma_0,\ldots,\sigma_M\right\}\subset \left\{-1,1\right\}^m
$$
such that 
$\sigma_0=\left\{1,1,\ldots,1\right\}$, $\rho(\sigma_i,\sigma_j)\geq \frac{m}{8} \ \forall  \ 0\leq i<k\leq M$ and
$M\geq 2^{m/8}$
where $\rho$ stands for the Hamming distance. 
\end{proposition}
Let 
$\m H':=\left\{P_{\sigma_0},\ldots, P_{\sigma_M}\right\}$ be chosen such that $\left\{\sigma_0,\ldots,\sigma_M\right\}$ satisfies the proposition above.
Next, following the proof of Theorems 1 and 3 in \citet{castro1}, 
we note that $\forall \sigma\in \m H', \ \sigma\ne\sigma_0$
\begin{align}
\label{KL}
&
{\rm KL}(P_{\sigma,N}\|P_{\sigma_0,N})\leq 8N\max_{x\in[0,1]}(\eta_{\sigma}(x)-\eta_{\sigma_0}(x))^2\leq 32 C_{L,\beta}^2 N q^{-2\beta},
\end{align}
where $P_{\sigma,N}$ is the joint distribution of $\left(X_i,Y_i\right)_{i=1}^N$ under hypothesis that the distribution of couple $(X,Y)$ is $P_{\sigma}$.
Let us briefly sketch the derivation of (\ref{KL});
see also the proof of Theorem 1 in \citet{castro1}. 
Denote 
\begin{align*}
&
\bar X_k:=(X_1,\ldots,X_k), \\
& 
\bar Y_k\,:=(Y_1,\ldots,Y_k)
\end{align*}
Then $d P_{\sigma,N}$ admits the following factorization:
\begin{align*}
&
d P_{\sigma,N}(\bar X_N,\bar Y_N)=\prod_{i=1}^N P_{\sigma}(Y_i|X_i)dP(X_i|\bar X_{i-1},\bar Y_{i-1}),
\end{align*}
where $dP(X_i|\bar X_{i-1},\bar Y_{i-1})$ does not depend on $\sigma$ but only on the active learning algorithm. As a consequence,
\begin{align*}
{\rm KL}(P_{\sigma,N}\|P_{\sigma_0,N})&=
\mb E_{P_{\sigma,N}}\log\frac{dP_{\sigma,N}(\bar X_N,\bar Y_N)}{dP_{\sigma_0,N}(\bar X_n,\bar Y_N)}=
\mb E_{P_{\sigma,N}}\log\frac{\prod_{i=1}^N P_{\sigma}(Y_i|X_i)}{\prod_{i=1}^N P_{\sigma_0}(Y_i|X_i)}=\\
&
=\sum_{i=1}^N \mb E_{P_{\sigma,N}} \left[\mb E_{P_{\sigma}}\left(\log\frac{P_{\sigma}(Y_i|X_i)}{P_{\sigma_0}(Y_i|X_i)}\left\lvert X_i\right.\right)\right]\leq \\
&
\leq N\max_{x\in[0,1]^d}\mb E_{P_{\sigma}}\left(\log\frac{P_{\sigma}(Y_1|X_1)}{P_{\sigma_0}(Y_1|X_1)}\left\lvert X_1=x\right.\right)\leq \\
&
\leq 8 N \max_{x\in[0,1]^d}(\eta_{\sigma}(x)-\eta_{\sigma_0}(x))^2,
\end{align*}
where the last inequality follows from Lemma 1, \citet{castro1}. 
Also, note that we have $\max_{x\in[0,1]^d}$ in our bounds rather than the average over $x$ that would appear in the passive learning framework.\\ 
It remains to choose $q, m$ in appropriate way:
set 
$q\simeq\lfloor C_1 N^{\frac{1}{2\beta+d-\beta\gamma}}\rfloor$ and 
$m=\lfloor C_2 q^{d-\beta\gamma}\rfloor$ where $C_1, \ C_2$ are such that 
$q^d\geq m\geq 1$ and $32C_{L,\beta}^2 N q^{-2\beta}<\frac{m}{64}$ which is possible for $N$ big enough. 
In particular, $mq^{-d}=O(q^{-\beta\gamma})$. 
Together with the bound (\ref{KL}), this gives
$$
\frac{1}{M}\sum_{\sigma\in \m H'}{\rm KL}(P_{\sigma}\|P_{\sigma^0})\leq 32 C_u^2 N q^{-2\beta} < \frac{m}{8^2}=\frac18\log|\m H'|,
$$
so that conditions of Theorem \ref{tsyb2.5} are satisfied.
Setting 
$$
f_{\sigma}(x):=\sign \eta_{\sigma}(x),
$$
 we finally have $\forall \sigma_1\ne\sigma_2\in \m H'$  
$$
d(f_{\sigma_1},f_{\sigma_2}):=\Pi(\sign \eta_{\sigma_1}(x)\ne\sign \eta_{\sigma_2}(x))\geq \frac{m}{8q^d}\geq C_4 N^{-\frac{\beta\gamma}{2\beta+d-\beta\gamma}},
$$
where the lower bound just follows by construction of our hypotheses. 
Since under the low noise assumption 
$R_P(\hat f_n)-R^*\geq c\Pi(\hat f_n\ne \sign\eta)^{\frac{1+\gamma}{\gamma}}$(see (\ref{noise})), 
we conclude that
\begin{align*}
&\inf_{\hat f_N}\sup_{P\in \m P_U^*(\beta,\gamma)} \Pr\left(R_{P}(\hat f_n)-R^*\geq C_4 N^{-\frac{\beta(1+\gamma)}{2\beta+d-\beta\gamma}}\right)\geq \\
&\geq \inf_{\hat f_N}\sup_{P\in \m P_U^*(\beta,\gamma)} 
\Pr\left(\Pi(\hat f_n(x)\ne\sign \eta_{P}(x))\geq \frac{C_4}{2}N^{-\frac{\beta\gamma}{2\beta+d-\beta\gamma}}\right)\geq \tau>0.
\end{align*}
\end{proof}
\subsection{Upper bounds for the excess risk}
\label{upper}
Below, we present a new active learning algorithm which is computationally tractable, adaptive with respect to 
$\beta,\gamma$(in a certain range of these parameters) and can be applied in the nonparametric setting. 
We show that the classifier constructed by the algorithm attains the rates of Theorem \ref{lower_bound}, up to polylogarithmic factor, 
if $0<\beta\leq 1$ and $ \beta\gamma\leq d$
(the last condition covers  the most interesting case when the regression function hits or crosses the decision boundary in the interior of the support of $\Pi$; 
for detailed statement about the connection between the behavior of the regression function near the decision boundary with parameters $\beta, \ \gamma$, see Proposition 3.4 in \citet{tsyb2}). 
The problem of adaptation to higher order of smoothness ($\beta>1$) is still awaiting its complete solution; 
we address these questions below in our final remarks.
\\ 
For the purpose of this section, the regularity assumption reads as follows: 
there exists $0<\beta\leq 1$ such that $\forall x_1,x_2\in[0,1]^d$ 
\begin{align}\label{holder1}
&
|\eta(x_1)-\eta(x_2)|\leq B_1\|x_1-x_2\|_{\infty}^{\beta}
\end{align}
Since we want to be able to construct non-asymptotic confidence bands, some estimates on the size of constants in (\ref{holder1}) and {\it assumption \ref{holder2}} are needed. Below, we will additionally assume that
\begin{align*}
&
B_1\leq \log N\\
&
B_2\geq \log^{-1}N, 
\end{align*}
where $N$ is the label budget. This can be replaced by any known bounds on $B_1,B_2$.\\
Let  $A\in \sigma(\m F_m)$ with $A_{\Pi}:=A\cap \supp(\Pi)\ne \emptyset$.
Define 
$$
\hat \Pi_A(dx):=\Pi(dx|x\in A_{\Pi})
$$
and 
$d_m:=\dim \m F_m|_{A_{\Pi}}$.
Next, we introduce a simple estimator of the regression function on the set $A_{\Pi}$. 
Given the resolution level $m$ and an iid sample $(X_i,Y_i), \ i\leq N$ with  $X_i\sim \hat \Pi_A$, let 
\begin{equation}
\label{est}
\hat \eta_{m,A}(x):=
\sum_{i:R_i\cap A_{\Pi}\ne \emptyset}
\frac{\sum_{j=1}^N Y_j \m I_{R_i}(X_j)}{N\cdot\hat\Pi_A(R_i)}\m I_{R_i}(x)
\end{equation}
Since we assumed that the marginal $\Pi$ is known, the estimator is well-defined. 
The following proposition provides the information about concentration of $\hat \eta_m$ around its mean: 
\begin{proposition}\label{supnorm}
For all $t>0$,
\begin{align*}
\Pr\Bigg(\max_{x\in A_{\Pi}}\lvert\hat\eta_{m,A}(x)-& \bar\eta_m(x)\rvert\ \geq t\sqrt{\frac{2^{dm}\Pi(A)}{u_1 N}}\Bigg)\leq \\
&
\leq 
2d_m\exp\left(\frac{-t^2}{2(1+\frac t3\sqrt{2^{dm}\Pi(A)/u_1 N})}\right),
\end{align*}
\end{proposition}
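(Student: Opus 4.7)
The plan is to reduce the claim to a cube-by-cube application of Bernstein's inequality. Since $A\in\sigma(\m F_m)$, the set $A$ is a union of dyadic cubes at resolution $m$, and on each such cube $R_i\subseteq A$ with $R_i\cap\supp(\Pi)\neq\emptyset$ the estimator $\hat\eta_{m,A}$ is constant. There are exactly $d_m$ such cubes, so
\[
\max_{x\in A_{\Pi}}|\hat\eta_{m,A}(x)-\bar\eta_m(x)|=\max_{i:R_i\subseteq A,\,R_i\cap\supp(\Pi)\neq\emptyset}|\hat\eta_{m,A}|_{R_i}-\bar\eta_m|_{R_i}|,
\]
and a union bound over the $d_m$ cubes will furnish the prefactor. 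The main content is therefore the per-cube concentration.

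Fix such an $R_i$. For each sample, let $Z_j:=Y_j\m I_{R_i}(X_j)$. Since $X_j\sim\hat\Pi_A$, the variables $Z_1,\ldots,Z_N$ are i.i.d., bounded by $1$ in absolute value, with
\[
\mathbb{E} Z_j=\int_{R_i}\eta\, d\hat\Pi_A=\frac{1}{\Pi(A)}\int_{R_i}\eta\, d\Pi,\qquad \mathrm{Var}(Z_j)\le \mathbb{E} Z_j^2\le \hat\Pi_A(R_i)=\frac{\Pi(R_i)}{\Pi(A)}.
\]
A short calculation shows that the estimator on $R_i$ rewrites as $\hat\eta_{m,A}|_{R_i}=\frac{\Pi(A)}{N\Pi(R_i)}\sum_{j=1}^N Z_j$, and its mean equals $\bar\eta_m|_{R_i}=\frac{1}{\Pi(R_i)}\int_{R_i}\eta\,d\Pi$; hence the estimator is unbiased for $\bar\eta_m$ on $R_i$. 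Applying Bernstein's inequality to $\sum_j(Z_j-\mathbb{E} Z_j)$ at the deviation level $s_i:=t\,\Pi(R_i)\sqrt{2^{dm}N/(u_1\Pi(A))}$, which corresponds precisely to the desired deviation $t\sqrt{2^{dm}\Pi(A)/(u_1 N)}$ for the estimator, gives
\[
\Prob{|\hat\eta_{m,A}|_{R_i}-\bar\eta_m|_{R_i}|\ge t\sqrt{2^{dm}\Pi(A)/(u_1 N)}}\le 2\exp\!\left(-\frac{s_i^2/2}{N\hat\Pi_A(R_i)+s_i/3}\right).
\]

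The key computation is to show that the Bernstein exponent, after dividing numerator and denominator by $N\hat\Pi_A(R_i)=N\Pi(R_i)/\Pi(A)$, reduces to the desired form. Direct substitution yields $s_i^2/(N\hat\Pi_A(R_i))=t^2\,\Pi(R_i)2^{dm}/u_1$ and $s_i/(N\hat\Pi_A(R_i))=t\sqrt{2^{dm}\Pi(A)/(u_1 N)}$. Invoking regularity, $\Pi(R_i)\ge u_1\, 2^{-dm}$ (which holds because $R_i\cap\supp(\Pi)\neq\emptyset$), we get the numerator factor $\Pi(R_i)2^{dm}/u_1\ge 1$, giving exactly
\[
2\exp\!\left(-\frac{t^2}{2\bigl(1+\frac{t}{3}\sqrt{2^{dm}\Pi(A)/(u_1 N)}\bigr)}\right).
\]
A union bound over the $d_m$ cubes completes the proof.

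The only subtlety I expect is bookkeeping: keeping track of the two different normalizations (with respect to $\Pi$ vs.\ $\hat\Pi_A$) and making sure the regularity assumption is applied at the right moment — specifically, using the lower bound $\Pi(R_i)\ge u_1 2^{-dm}$ only on the cube-variance term while leaving the $\sqrt{\Pi(A)}$ factor intact on the linear term, so that the stated form of the inequality emerges exactly. No other step should require more than a direct application of Bernstein's inequality.
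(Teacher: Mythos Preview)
Your proposal is correct and follows exactly the approach the paper uses: apply Bernstein's inequality cube-by-cube to $S_N^i=\sum_j Y_j\m I_{R_i}(X_j)$, bound the variance by $\hat\Pi_A(R_i)$, invoke $(u_1,u_2)$-regularity via $\hat\Pi_A(R_i)\ge u_1/(2^{dm}\Pi(A))$, and then union bound over the $d_m$ cubes. Your write-up simply fills in the algebra that the paper leaves as ``simple algebra,'' and the bookkeeping you flag (the two normalizations and where the regularity lower bound enters) is handled correctly.
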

\begin{proof}
This is a straightforward application of the Bernstein's inequality to the random variables 
$$
S_N^i:=\sum_{j=1}^N Y_j\m I_{R_i}(X_j), \ i\in\left\{i:R_i\cap A_{\Pi}\ne \emptyset\right\},
$$
and the union bound: indeed, note that 
$\mb E(Y \m I_{R_i}(X_j))^2=\hat\Pi_A(R_i)$, 
so that 
$$
\Pr\left(\left|S_N^i-N\int_{R_i}\eta d\hat\Pi_A\right|\geq tN\hat\Pi_A(R_i)\right)
\leq 2\exp\left(-\frac{N\hat\Pi_A(R_i)t^2}{2+2t/3}\right),
$$  
and the rest follows by simple algebra using that 
$\hat\Pi_A(R_i)\geq \frac{u_1}{2^{dm}\Pi(A)}$ 
by the $(u_1,u_2)$-regularity of $\Pi$.
\end{proof}
Given a sequence of hypotheses classes $\m G_m, \ m\geq 1$, define the index set 
\begin{align}
\label{index_set}
&
\m J(N):=\left\{m\in \mb N: \ 1\leq  \dim \m G_m\leq \frac{N}{\log^2 N}\right\}
\end{align}
 - the set of possible ``resolution levels'' of an estimator based on $N$ classified observations(an upper bound corresponds to the fact that we want the estimator to be consistent). When talking about model selection procedures below, we will implicitly assume that the model index is chosen from the corresponding set $\m J$. The role of $\m G_m$ will be played by $\m F_m|_A$ for appropriately chosen set $A$.
We are now ready to present the active learning algorithm followed by its detailed analysis(see Table 1).
\\
\begin{table}[t]
\centering
\label{alg1}
{\footnotesize
\begin{tabular}{|l|}
\hline
 $\mbox{\bf Algorithm 1a}$ \\
\hline
 $\mbox{{\bf input } label budget } N; \ \text{confidence }\alpha$;\\
 $\hat m_0=0, \ \mathcal{\hat F}_0:=\mathcal{F}_{\hat m_0}, \ \hat\eta_0\equiv  0$; \\ 
 $LB:=N$; 
\hspace{2cm} // {\it label budget} \\
 $N_0:=2^{\lfloor\log_2 \sqrt{N}\rfloor}$;\\
 $s^{(k)}(m,N,\alpha):=s(m,N,\alpha):=m(\log N+\log\frac{1}{\alpha})$;\\
 $k:=0$;\\
 \bf{while} $LB\geq 0$ {\bf do} \\
 $k:=k+1$; \\
 $N_k:=2N_{k-1}$;\\
 $\hat A_k:=\left\{x\in [0,1]^d: \ \exists f_1,f_2\in \hat{\mathcal{F}}_{k-1}, \sign(f_1(x))\ne\sign(f_2(x))\right\};$  \\
 $\mbox{\bf if } \hat A_k\cap \supp(\Pi)=\emptyset \mbox{ \bf or } LB<\lfloor N_k\cdot \Pi(\hat A_k) \rfloor\mbox{  \bf then } $ \\
 $\qquad\qquad\mbox{ {\bf break; output }} \hat g:=\sign \hat \eta_{k-1}$ \\
 $\qquad\qquad \mbox{ \bf else}$ \\
 $\mbox{{\bf for }} i=1\ldots \lfloor N_{k}\cdot \Pi(\hat A_k)\rfloor$\\
 $\mbox{{\bf sample i.i.d} } \left(X_{i}^{(k)},Y_i^{(k)}\right) \mbox{{\bf with }} X_i^{(k)}
\sim\hat\Pi_k:=\Pi(dx|x\in\hat A_k);$   
 \\
 $\mbox{{\bf end for}}$;\\ 
$LB:=LB-\lfloor N_{k}\cdot \Pi(\hat A_k)\rfloor$; \\
 $\hat P_k:=\frac{1}{\lfloor N_{k}\cdot \Pi(\hat A_k)\rfloor}
\sum\limits_{i}\delta_{X^{(k)}_{i},Y^{(k)}_{i}}$  \quad // {\it ''active'' empirical measure}
\\
 $\hat m_k:=\argmin_{m\geq \hat m_{k-1}}\left[\inf_{f\in\m F_m} \hat P_k(Y-f(X))^2+K_1\frac{2^{dm}\Pi(\hat A_k)+s(m-\hat m_{k-1},N,\alpha)}{\lfloor N_{k}\cdot \Pi(\hat A_k)\rfloor}\right]$ 
  \\
 $\hat \eta_k:=\hat \eta_{\hat m_k,\hat A_{k}}$ \qquad // {\it see (\ref{est})}
 \\
 $\delta_{k}:=\tilde D\cdot \log^2\frac{N}{\alpha}\sqrt{\frac{2^{d\hat m_k}}{N_k}}$; 
 \\
 $\hat{\m F}_k:=\left\{f\in \m F_{\hat m_k}: \ f|_{\hat A_k}\in\mathcal{F}_{\infty,\hat A_k}(\hat \eta_k;\delta_{k}), \ 
 \ f|_{[0,1]^d\setminus {\hat A}_k}\equiv\hat\eta_{k-1}|_{[0,1]^d\setminus {\hat A}_k}\right\}$;\\
 $\mbox{\bf{end; }}$ \\
\hline
\end{tabular} 
}
\caption{Active Learning Algorithm}
\end{table}
\par
{\bf Remark} 
Note that on every iteration,
{\bf Algorithm 1a} uses the whole sample to select the resolution level $\hat m_k$ and to build the estimator $\hat \eta_k$. 
While being suitable for practical implementation, this is not convenient for theoretical analysis. 
We will prove the upper bounds for a slighly modified version: 
namely, on every iteration $k$ labeled data 
is divided into two subsamples $S_{k,1}$ and $S_{k,2}$ of approximately equal size,
$|S_{k,1}|\simeq |S_{k,2}|\simeq\left\lfloor \frac 12 N_{k}\cdot \Pi(\hat A_k)\right\rfloor$.
Then $S_{1,k}$ is used to select the resolution level $\hat m_k$ 
and $S_{k,2}$ - to construct $\hat \eta_k$.
We will call this modified version {\bf Algorithm 1b}.
\par   
As a first step towards the analysis of {\bf Algorithm 1b}, 
let us prove the useful fact about the general model selection scheme. 
Given an iid sample $(X_i,Y_i), \ i\leq N$, set $s_m=m(s+\log\log_2 N), \ m\geq1$ and 
\begin{align}\label{res1}
&
\hat m:=\hat m(s)=\argmin_{\substack{m \in \m J(N)}}
\left[
\inf_{f\in\m F_m} P_N(Y-f(X))^2+K_1\frac{ 2^{dm}+ s_m }{N}
\right]
\\
&
\bar m:=\min\left\{
m\geq 1: \ \inf_{f\in \m F_m}\mb E(f(X)-\eta(X))^2\leq K_2 \frac{ 2^{dm}}{N}
\right\}
\end{align}
\begin{theorem}\label{opt_selection}
There exist an absolute constant $K_1$ big enough such that, with probability 
$\geq 1-e^{-s}$,
$$
\hat m\leq \bar m
$$
\end{theorem}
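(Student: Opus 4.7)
The plan is to combine a localized concentration inequality for the excess squared loss class over each $\m F_m$ with an optimality comparison at $\bar m$, and then argue by contradiction that the penalized empirical criterion cannot be minimized at any $m>\bar m$.

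Denote the excess loss $L_f(X,Y):=(Y-f(X))^2-(Y-\eta(X))^2$, so that $\mb EL_f=\|f-\eta\|_{L_2(\Pi)}^2$, $L_f$ is uniformly bounded, and $\mathrm{Var}(L_f)\le c\,\mb EL_f$. The class $\m F_m$ is the $\|\cdot\|_\infty$-unit ball in a linear space of dimension $2^{dm}$, with Rademacher complexity of order $\sqrt{2^{dm}/N}$. A standard Talagrand-type inequality with localization (peeling over sublevel sets of $\mb EL_f$ and AM--GM on the resulting variance term) yields that for each fixed $m\in\m J(N)$ and each $t>0$, with probability at least $1-e^{-t}$,
\begin{equation*}
\sup_{f\in\m F_m}|(P_N-P)L_f|\;\le\;\tfrac12\,\mb EL_f+C_0\,\frac{2^{dm}+t}{N}.
\end{equation*}
Taking $t=s_m=m(s+\log\log_2 N)$ and union-bounding over $m\in\m J(N)\subseteq\{1,\dots,\lfloor\log_2 N\rfloor\}$, the total failure probability is at most $\sum_{m\ge1}(e^{-s}/\log_2 N)^m\le e^{-s}$ for $N$ large enough. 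On the complementary event $\Omega_0$ the two-sided bound holds simultaneously for all $m\in\m J(N)$ and $f\in\m F_m$.

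Working on $\Omega_0$, I compare the two sides of the optimality condition $\inf_{f\in\m F_{\hat m}}P_NL_f+K_1(2^{d\hat m}+s_{\hat m})/N\le\inf_{f\in\m F_{\bar m}}P_NL_f+K_1(2^{d\bar m}+s_{\bar m})/N$. For an upper bound at $\bar m$, plug $f=\bar\eta_{\bar m}$ into the upper half of the concentration bound; the defining property of $\bar m$ gives $\mb EL_{\bar\eta_{\bar m}}\le K_2\cdot 2^{d\bar m}/N$, so $\inf_{f\in\m F_{\bar m}}P_NL_f\le C_3(2^{d\bar m}+s_{\bar m})/N$. For a lower bound at any $m$, using $\mb EL_f\ge 0$ in the lower half yields $\inf_{f\in\m F_m}P_NL_f\ge -C_0(2^{dm}+s_m)/N$. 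If $\hat m>\bar m$, these combine with the optimality of $\hat m$ to force
\begin{equation*}
(K_1-C_0)\frac{2^{d\hat m}+s_{\hat m}}{N}\;\leq\;(K_1+C_3)\frac{2^{d\bar m}+s_{\bar m}}{N}.
\end{equation*}
Since $\hat m\ge\bar m+1$ and $d\ge 1$, we have $2^{d\hat m}\ge 2\cdot 2^{d\bar m}$ and $s_{\hat m}\ge s_{\bar m}$; choosing $K_1$ large enough relative to $C_0,C_3$ makes the displayed inequality impossible, contradicting $\hat m>\bar m$.

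The main technical step is the localized concentration with the sharp variance coefficient $\tfrac12$, which allows the cross-term $\sqrt{\mb EL_f\cdot(2^{dm}+t)/N}$ from Bernstein to be absorbed into $\tfrac12\mb EL_f$ via AM--GM, so the remaining ``noise'' term depends only on $2^{dm}/N$ and $t/N$. A secondary, bookkeeping-level subtlety is verifying that $(2^{d\hat m}+s_{\hat m})/(2^{d\bar m}+s_{\bar m})$ is strictly bounded below by some constant $>1$ uniformly in $\bar m$: this is immediate when $2^{d\bar m}\gtrsim s_{\bar m}$ (the typical case, as $s_{\bar m}=O(\log^2 N)$ while $2^{d\bar m}$ is polynomial in $N$ at the bias--variance optimum), but needs a brief case analysis otherwise; exponential growth of $2^{dm}$ in $m$ provides the required margin in either regime.
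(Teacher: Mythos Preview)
Your approach is essentially the paper's: both establish a localized excess-risk concentration bound uniformly over the models and then argue by contradiction that the penalized empirical criterion cannot select $\hat m>\bar m$. The paper cites Koltchinskii's Theorem~4.2 to obtain, on an event of probability $\ge 1-e^{-s}$, the two-sided ratio bounds $\m E_P(\m F_m,f)\le 2\bigl(\m E_{P_N}(\m F_m,f)\vee\pi_N(m,s)\bigr)$ and $\m E_{P_N}(\m F_m,f)\le\tfrac32\bigl(\m E_P(\m F_m,f)\vee\pi_N(m,s)\bigr)$ with $\pi_N(m,s)=K_2(2^{dm}+s+\log\log_2 N)/N$, and then shows that $\tau_N(\hat m,s)-\tau_N(\bar m,s)>3\pi_N(\hat m,s)$ forces $\inf_{f\in\m F_{\bar m}}\mb E(f-\eta)^2>\pi_N(\hat m,s)\ge\pi_N(\bar m,s)$, contradicting the definition of $\bar m$.

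There is, however, a real gap in your final contradiction, caused by taking $t=s_m=m(s+\log\log_2 N)$ in the concentration bound. With this choice the noise term at level $\hat m$ is $C_0(2^{d\hat m}+s_{\hat m})/N$, and you arrive at $(K_1-C_0)(2^{d\hat m}+s_{\hat m})\le(K_1+C_3)(2^{d\bar m}+s_{\bar m})$. But the ratio $\bigl(2^{d(\bar m+1)}+s_{\bar m+1}\bigr)/\bigl(2^{d\bar m}+s_{\bar m}\bigr)$ is \emph{not} bounded below by a constant $>1$ uniformly in $(\bar m,s)$: when $s$ is large enough that $s_{\bar m}=\bar m(s+\log\log_2 N)$ dominates $2^{d\bar m}$, this ratio is approximately $(\bar m+1)/\bar m$, which approaches $1$ as $\bar m$ grows (and $\bar m$ can be of order $\log N$). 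Your appeal to ``exponential growth of $2^{dm}$'' does not rescue this regime, so no absolute $K_1$ works. The fix is precisely what the paper does: take the per-model confidence level constant in $m$, say $t_m=s+\log\log_2 N$, so the noise becomes $C_0(2^{dm}+s+\log\log_2 N)/N$. Then the penalty \emph{increment} $K_1(s_{\hat m}-s_{\bar m})\ge K_1(s+\log\log_2 N)$ dominates the $s$-part of the noise $(C_0+C_3)(s+\log\log_2 N)$, while $2^{d\hat m}-2^{d\bar m}\ge\tfrac12\,2^{d\hat m}$ handles the dimension part, and the contradiction follows for $K_1>2(C_0+C_3)$.
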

\begin{proof}
See Appendix \ref{res_lvl}.
\end{proof}
Straightforward application of this result immediately yields the following: 
\begin{corollary}\label{resol_bound}
Suppose $\eta(x)\in \Sigma(\beta,L,[0,1]^d)$. 
Then, with probability $\geq 1-e^{-s}$,
$$
2^{\hat m}\leq C_1\cdot N^{\frac{1}{2\beta+d}} 
$$
\end{corollary}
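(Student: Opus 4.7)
The plan is to combine Theorem \ref{opt_selection}, which gives the high-probability bound $\hat m \leq \bar m$, with an explicit approximation-theoretic estimate on $\bar m$ for H\"older functions. Since the work of this section is carried out under the regularity hypothesis $0<\beta\leq 1$ (see (\ref{holder1})), piecewise-constant models on the dyadic partitions are rich enough to realize the full H\"older approximation rate.

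First I would bound the bias term $\inf_{f\in \m F_m}\mb E(f(X)-\eta(X))^2$. The function class $\m F_m$ contains all functions that are constant on each dyadic cube $R_i$ of sidelength $2^{-m}$ (with coefficients bounded by $1$, which is harmless since $|\eta|\leq 1$). Choosing $f$ to equal the value of $\eta$ at the center of each $R_i$, the H\"older condition $|\eta(x_1)-\eta(x_2)|\leq B_1\|x_1-x_2\|_\infty^\beta$ immediately yields
\begin{equation*}
\|f-\eta\|_{\infty,[0,1]^d}\leq B_1\,(2^{-m})^{\beta} = B_1\,2^{-\beta m},
\end{equation*}
and hence $\inf_{f\in \m F_m}\mb E(f(X)-\eta(X))^2 \leq B_1^2\,2^{-2\beta m}$.

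Next I would plug this bound into the definition of $\bar m$. By definition $\bar m$ is the smallest $m\geq 1$ for which the bias is at most $K_2\,2^{dm}/N$; since we have already shown the bias is at most $B_1^2\,2^{-2\beta m}$, it suffices that $B_1^2\,2^{-2\beta m}\leq K_2\,2^{dm}/N$, i.e.\ $2^{m(2\beta+d)}\geq B_1^2 N/K_2$. The minimality of $\bar m$ therefore gives
\begin{equation*}
2^{\bar m} \leq 2\cdot \bigl(B_1^2 N/K_2\bigr)^{\frac{1}{2\beta+d}} \leq C_1\,N^{\frac{1}{2\beta+d}},
\end{equation*}
absorbing the factor $B_1^{2/(2\beta+d)}$ into $C_1$ (recall that the section's standing assumption $B_1\leq \log N$ makes this factor at most polylogarithmic, and we may enlarge $C_1$ accordingly; the sharp polynomial order is preserved). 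Finally, Theorem \ref{opt_selection} applied with the choice of penalty in {\bf Algorithm 1b} (where $s$ corresponds to $\log(1/\alpha)$ up to absolute constants and $s_m$ matches the required form) gives $\hat m\leq \bar m$ on an event of probability at least $1-e^{-s}$; combining the two bounds completes the proof.

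There is no real obstacle here: the only content beyond quoting Theorem \ref{opt_selection} is the standard approximation inequality for piecewise constants on a H\"older class, which uses only $\beta\leq 1$ and the $L_\infty$ modulus of continuity. The mild subtlety is to verify that the index $\bar m$ produced by this argument lies in the allowed range $\m J(N)$; this holds because $2^{d\bar m}\lesssim N^{d/(2\beta+d)}\leq N/\log^2 N$ for $N$ large, so $\bar m$ is indeed admissible and the minimum in (\ref{res1}) ranges over it.
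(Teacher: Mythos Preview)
Your proposal is correct and follows essentially the same route as the paper: bound the approximation error of $\m F_m$ on a H\"older class by $L^2 2^{-2\beta m}$, deduce $2^{\bar m}\leq C_1 N^{1/(2\beta+d)}$ from the definition of $\bar m$, and invoke Theorem~\ref{opt_selection} to transfer the bound to $\hat m$. Your additional remarks (verifying $\bar m\in \m J(N)$ and tracking the dependence on $B_1$) are extra care not present in the paper's terse argument, but the core logic is identical.
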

\begin{proof}
By definition of $\bar m$, we have
\begin{align*}
\bar m& \leq 
1+\max\left\{
m : \ \inf_{f\in \m F_m} \mb E(f(X)-\eta(X))^2 > K_2\frac{2^{dm}}{N}
\right\}\leq \\
&
\leq 
1+\max\left\{
m:\ L^2 2^{-2\beta m}> K_2\frac{2^{dm}}{N}
\right\},
\end{align*}
and the claim follows.
\end{proof}
With this bound in hand, we are ready to formulate and prove the main result of this section:
\begin{theorem}\label{main}
Suppose that $P\in \m P_U^*(\beta,\gamma)$ with $B_1\leq \log N, \ B_2\geq \log^{-1}N$ and $\beta\gamma\leq d$. 
Then, with probability 
$\geq 1-3\alpha$,
the classifier $\hat g$ returned by {\bf Algorithm 1b} with label budget $N$ satisfies 
$$
R_{P}(\hat g)-R^*\leq {\rm Const}\cdot 
N^{-\frac{\beta(1+\gamma)}{2\beta+d-\beta\gamma}}\log^p\frac{N}{\alpha},
$$
where $p\leq\frac{2\beta\gamma(1+\gamma)}{2\beta+d-\beta\gamma}$ and $B_1, \ B_2$ are the constants from (\ref{holder1}) and {\it assumption} \ref{holder2}.
\end{theorem}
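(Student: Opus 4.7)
The plan is to argue by induction on the iteration counter $k$ of \textbf{Algorithm 1b}, maintaining on a high-probability event three invariants after stage $k$ is completed:
\emph{(i)} the regression function is trapped by the sup-norm band, $\|\hat\eta_k-\eta\|_{\infty,\hat A_k}\leq\delta_k$;
\emph{(ii)} the running classifier (obtained by extending $\hat\eta_k$ by $\hat\eta_{k-1}$ off $\hat A_k$, as implicit in the definition of $\hat{\m F}_k$) already agrees with $\sign\eta$ on $\hat A_k^c\cap\supp(\Pi)$;
\emph{(iii)} the active set approximates the decision boundary at scale $\delta_{k-1}$, i.e.\ $\hat A_k\cap\supp(\Pi)\subseteq\{|\eta|\leq2\delta_{k-1}\}$, which by the low-noise condition yields $\Pi(\hat A_k)\leq B(2\delta_{k-1})^{\gamma}$. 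The base case is immediate since $\hat A_1=[0,1]^d$.

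For the inductive step I would condition on the event that \emph{(i)}--\emph{(iii)} survive through stage $k-1$ and apply Corollary \ref{resol_bound} to the first sub-sample $S_{k,1}$ drawn from $\hat\Pi_k$, whose effective size is $n_k:=\lfloor N_k\Pi(\hat A_k)\rfloor$. The crucial observation is that the variance scale in Proposition \ref{supnorm} is $2^{dm}\Pi(\hat A_k)/n_k=2^{dm}/N_k$, so the effective noise level is governed by $N_k$ rather than by $n_k$---this is precisely the active-learning gain. The corollary then yields $2^{\hat m_k}\lesssim N_k^{1/(2\beta+d)}$, and Proposition \ref{supnorm} applied to the independent half $S_{k,2}$ with deviation parameter $t\asymp\log(N/\alpha)$ controls $\|\hat\eta_k-\bar\eta_{\hat m_k}\|_{\infty,\hat A_k}$ by $\log(N/\alpha)\sqrt{2^{d\hat m_k}/(u_1 N_k)}$. \textit{Assumption \ref{holder2}} upgrades the $L_2(\hat\Pi_k)$ bias estimate coming from $\hat m_k\leq\bar m$ and the H\"older smoothness (\ref{holder1}) into the sup-norm bias $\|\bar\eta_{\hat m_k}-\eta\|_{\infty,\hat A_k}\lesssim B_2^{-1/2}\,2^{-\beta\hat m_k}$; both contributions are absorbed into $\delta_k$ provided $\tilde D$ is a sufficiently large absolute constant, proving \emph{(i)}. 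Invariant \emph{(ii)} at stage $k+1$ is then automatic: $\hat A_{k+1}$ by construction discards the points at which every element of $\hat{\m F}_k$ has a definite sign, and by \emph{(i)} that sign must coincide with $\sign\eta$; invariant \emph{(iii)} at stage $k+1$ follows because points of $\hat A_{k+1}$ satisfy $|\hat\eta_k|\leq\delta_k$, hence $|\eta|\leq 2\delta_k$ by \emph{(i)}.

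Once the invariants propagate, \emph{(i)} gives $\delta_k\lesssim\log^2(N/\alpha)\,N_k^{-\beta/(2\beta+d)}$ while \emph{(iii)} forces
\[
n_k\asymp N_k\Pi(\hat A_k)\lesssim\log^{2\gamma}(N/\alpha)\,N_k^{(2\beta+d-\beta\gamma)/(2\beta+d)}.
\]
Because $N_k$ doubles at each step, the sequence $(n_k)$ is geometric, so $\sum_{j\leq k}n_j\asymp n_k$; equating the cumulative label count with the budget $N$ at the last completed iteration $k^*$ yields $N_{k^*}\asymp N^{(2\beta+d)/(2\beta+d-\beta\gamma)}$ up to polylogs, and therefore $\delta_{k^*}\lesssim\log^p(N/\alpha)\,N^{-\beta/(2\beta+d-\beta\gamma)}$ for a suitable $p$. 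The output $\hat g$ can disagree with $\sign\eta$ only inside $\hat A_{k^*}$, and by \emph{(i)} the deviation of the underlying estimator from $\eta$ there is at most $\delta_{k^*}$, so the comparison inequality (\ref{sup}) of Proposition \ref{risk_bound} produces $R_P(\hat g)-R^*\lesssim\delta_{k^*}^{1+\gamma}$, which is the announced rate.

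The main obstacle I anticipate is keeping the three invariants alive \emph{simultaneously} across the $O(\log N)$ iterations and the various resolution levels inside each model-selection step: this requires a union bound in which $\alpha$ is replaced by something like $\alpha/k^2$ at stage $k$, and it is the source of the polylogarithmic factor $p$. A more delicate point is the sup-norm control of the bias: since $\hat m_k$ is chosen by an $L_2$-flavored criterion, \textit{assumption \ref{holder2}} is precisely the bridge that promotes the $L_2$ guarantee provided by Corollary \ref{resol_bound} into the sup-norm confidence-band width $\delta_k$ that drives the whole iterative scheme.
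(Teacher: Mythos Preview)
Your outline matches the paper's argument almost step for step: the paper too establishes the sup-norm bound $\|\hat\eta_k-\eta\|_{\infty,\hat A_k}\lesssim N_k^{-\beta/(2\beta+d)}\log^2(N/\alpha)$ by combining the resolution bound $2^{\hat m_k}\lesssim N_k^{1/(2\beta+d)}$, the Bernstein-type deviation of Proposition \ref{supnorm} on the independent half-sample, and Assumption \ref{holder2} for the bias term; it then plugs this into the low-noise condition to get $\Pi(\hat A_k)\lesssim N_k^{-\beta\gamma/(2\beta+d)}\log^{2\gamma}(N/\alpha)$, sums the geometric label counts to extract $N_L$, and finishes with inequality (\ref{sup}). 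Your explicit invariants \emph{(i)}--\emph{(iii)} are exactly the content of the paper's display (\ref{Z3}) together with the definition of the active set.

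There is, however, one genuine gap in your inductive step. You write that the $L_2(\hat\Pi_k)$ bias estimate comes ``from $\hat m_k\leq\bar m$ and the H\"older smoothness'', and later that Corollary \ref{resol_bound} ``provides the $L_2$ guarantee''. But $\hat m_k\leq\bar m$ is an \emph{upper} bound on the resolution level, whereas the approximation error $\|\bar\eta_{\hat m_k}-\eta\|$ only shrinks as the resolution \emph{increases}; Theorem \ref{opt_selection} and its corollary therefore say nothing about the bias at the selected level, which could in principle be much larger than $N_k^{-\beta/(2\beta+d)}$ if $\hat m_k$ happens to fall well below $\bar m$. The paper closes this by invoking a separate oracle inequality, its display (\ref{oracle}), which bounds $\|\bar\eta_k-\eta\|_{L_2(\hat\Pi_k)}^2$ directly by the penalized infimum over all $m\geq\hat m_{k-1}$; only this bound is then fed into Assumption \ref{holder2} to obtain the sup-norm bias control (\ref{supnorm1}). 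Without this additional ingredient---or an argued \emph{lower} bound on $\hat m_k$---your invariant \emph{(i)} is not established, and the rest of the induction does not get off the ground.
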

{\bf Remarks }
\begin{enumerate}
\item Note that when $\beta\gamma>\frac d 3$,  
$N^{-\frac{\beta(1+\gamma)}{2\beta+d-\beta\gamma}}$ is a {\it fast rate}, i.e., faster than $N^{-\frac{1}{2}}$; 
at the same time, the passive learning rate $N^{-\frac{\beta(1+\gamma)}{2\beta+d}}$ is guaranteed to be fast only when $\beta\gamma>\frac d 2$, see \citet{tsyb2}. \\
\item For 
$\hat\alpha\simeq N^{-\frac{\beta(1+\gamma)}{2\beta+d-\beta\gamma}}$  {\bf Algorithm 1b} returns a classifier 
$\hat g_{\hat \alpha}$ that satisfies
$$
\mb E R_{P}(\hat g_{\hat \alpha})-R^*\leq {\rm Const}\cdot 
N^{-\frac{\beta(1+\gamma)}{2\beta+d-\beta\gamma}}\log^p N.
$$
This is a direct corollary of Theorem \ref{main} and the inequality \\
$$
\mb E|Z|\leq t+\|Z\|_{\infty}\Pr(|Z|\geq t)
$$
\end{enumerate}
\begin{proof}
Our main goal is to construct high probability bounds for the size of the active sets defined by {\bf Algorithm 1b}. 
In turn,
these bounds depend on the size of the confidence bands for $\eta(x)$, 
and the previous result(Theorem \ref{opt_selection}) is used to obtain the required estimates. 
Suppose $L$ is the number of steps performed by the algorithm before termination; clearly, $L\leq N$.\\
Let $N_k^{\act}:=\lfloor N_k\cdot \Pi(\hat A_k)\rfloor$ 
be the number of labels requested on $k$-th step of the algorithm: this choice guarantees that the ''density'' of labeled examples doubles on every step. \\
Claim: the following bound for the size of the active set holds uniformly
for all $2\leq k\leq L$ with probability at least
$
1-2\alpha
$:
\begin{align}
\label{Z3}
\Pi(\hat A_{k})&\leq 
\, C N_k^{-\frac{\beta\gamma}{2\beta+d}}\left(\log \frac{N}{\alpha}\right)^{2\gamma}
\end{align}
It is not hard to finish the proof assuming (\ref{Z3}) is true: indeed, it implies 
 that the number of labels requested on step $k$ satisfies
$$
N_k^{\act}=\lfloor N_k\Pi(\hat A_k)\rfloor\leq C\cdot N_k^{\frac{2\beta+d-\beta\gamma}{2\beta+d}}
\left(\log \frac{N}{\alpha}\right)^{2\gamma}
$$
with probability $\geq 1-2\alpha$.
Since $\sum\limits_k N_k^{\act}\leq N $,
one easily deduces that on the last iteration $L$ we have
\begin{equation}\label{Z7}
N_L\geq c\left(\frac{N}{\log^{2\gamma} (N/\alpha)}\right)^{\frac{2\beta+d}{2\beta+d-\beta\gamma}}
\end{equation}
To obtain the risk bound of the theorem from here, we apply inequality (\ref{sup}) 
\footnote{alternatively, inequality (\ref{square}) can be used but results in a slightly inferior logarithmic factor.}
from proposition \ref{risk_bound}:
\begin{equation}\label{final}
R_{P}(\hat g)-R^*\leq 
D_1\|(\hat\eta_L-\eta)\cdot\m I\left\{\sign \hat\eta_L\ne \sign \eta\right\}\|_{\infty}^{1+\gamma}
\end{equation}  
It remains to estimate $\|\hat \eta_L-\eta\|_{\infty,\hat A_L}$: we will show below while proving (\ref{Z3}) that 
$$
\|\hat \eta_L-\eta\|_{\infty,\hat A_L}\leq 
C\cdot N_{L}^{-\frac{\beta}{2\beta+d}}\log^2 \frac{N}{\alpha}
$$
Together with (\ref{Z7}) and (\ref{final}), it implies the final result.
\par
To finish the proof, it remains to establish (\ref{Z3}). 
Recall that $\bar \eta_k$ stands for the $L_2(\Pi)$ - projection of $\eta$ onto $\m F_{\hat m_k}$.
An important role in the argument is played by the bound on the $L_2(\hat\Pi_k)$ - norm of 
the ``bias'' $(\bar \eta_{k}-\eta)$: 
together with {\it assumption} \ref{holder2}, it allows to estimate 
$\|\bar \eta_{k}-\eta\|_{\infty,\hat A_k}$.   
The required bound follows from the following oracle inequality:  
there exists an event $\m B$ of probability $\geq 1-\alpha$ 
such that on this event for every $1\leq k\leq L$
\begin{align} \label{oracle}
\|\bar{\eta}_{k}-\eta\|_{L_2(\hat\Pi_k)}^2\leq 
\inf_{m\geq \hat m_{k-1}}
\Bigg[
\inf_{f\in \m F_m}&\|f-\eta\|_{L_2(\hat\Pi_k)}^2+ \\
& \nonumber
+K_1\frac{2^{dm}\Pi(\hat A_k) +(m-\hat m_{k-1})\log(N/\alpha)}{N_k \Pi(\hat A_k)}
\Bigg]
\end{align}
It general form, this inequality is given by Theorem 6.1 in \citet{kolt6} and provides the estimate for 
$\|\hat\eta_k-\eta\|_{L_2(\hat\Pi_k)}$, so it automatically implies the weaker bound for the bias term only. 
To deduce (\ref{oracle}), we use the mentioned general inequality $L$ times(once for every iteration) and the union bound. 
The quantity $2^{dm}\Pi(\hat A_k)$ in (\ref{oracle}) plays the role of the dimension, which is justified below. 
Let $k\geq 1$ be fixed.
For  $m\geq \hat m_{k-1}$, 
consider  hypothesis classes 
$$
\m F_m|_{\hat A_k}:=\left\{f\m I_{\hat A_k}, \ f\in \m F_m\right\}
$$
An obvious but important fact is that for $P\in \m P_U(\beta,\gamma)$, 
the dimension of $\m F_m|_{\hat A_k}$ is bounded by $u_1^{-1}\cdot 2^m \Pi(\hat A_k)$: indeed, 
$$
\Pi(\hat A_k)=\sum_{j:R_j\cap \hat A_k\ne\emptyset} 
\Pi(R_j)\geq u_1 2^{-dm}\cdot \#\left\{j:R_j\cap \hat A_k\ne\emptyset\right\},
$$
hence
\begin{equation}\label{dimension}
{\rm dim }\, \m F_m|_{\hat A_k}=\#\left\{j:R_j\cap \hat A_k\ne\emptyset\right\}\leq u_1^{-1}\cdot 2^m \Pi(\hat A_k).
\end{equation}
Theorem \ref{opt_selection} applies conditionally on 
$\left\{X_i^{(j)}\right\}_{i=1}^{N_j}, \ j\leq k-1$
with sample of size
$N_k^{\act}$
and $s=\log(N/\alpha)$:
to apply the theorem, note that, by definition of $\hat A_k$, it is independent of $X_i^{(k)}, \ i=1\ldots N_k^{\act}$.
Arguing as in Corollary \ref{resol_bound} and using (\ref{dimension}), 
we conclude that the following inequality holds with probability 
$\geq 1-\frac{\alpha}{N}$ for every fixed $k$: 
\begin{align}
& \label{Z1}
2^{\hat m_k}\leq C\cdot N_k^{\frac{1}{2\beta+d}}.
\end{align}
Let $\m E_{1}$ be an event of probability $\geq 1-\alpha$ such that on this event bound 
(\ref{Z1}) holds for every step $k$, $k\leq L$ and let $\m E_2$ be an event of probability $\geq 1-\alpha$ on which inequalities (\ref{oracle}) are satisfied.
Suppose that event $\m E_1\cap \m E_2$ occurs and let $k_0$ be a fixed arbitrary integer $2\leq k_0\leq L+1$. 
It is enough to assume that $\hat A_{k_0-1}$ is nonempty(otherwise, the bound trivially holds), so that it contains at least one cube with sidelength $2^{-\hat m_{k_0-2}}$ and
\begin{equation}\label{Z5}
\Pi(\hat A_{k_0-1})\geq 
u_1 2^{-d\hat m_{k_0-1}}\geq c N_{k_0}^{-\frac{d}{2\beta+d}}
\end{equation}
Consider inequality (\ref{oracle}) with $k=k_0-1$ and $2^m\simeq N_{k_0-1}^{\frac{1}{2\beta+d}}$. 
By (\ref{Z5}), we have 
\begin{equation}
\label{Z8}
 \|\bar\eta_{k_0-1}-\eta\|^2_{L_2(\hat\Pi_{k_0-1})}\leq 
C N_{k_0-1}^{-\frac{2\beta}{2\beta+d}} \log^2 \frac{N}{\alpha}
\end{equation}
For convenience and brevity, denote $\Omega:=\supp(\Pi)$.
Now {\it assumption \ref{holder2}} comes into play: it implies, together with (\ref{Z8}) that
\begin{align}
 \label{supnorm1}
C N_{k_0-1}^{-\frac{\beta}{2\beta+d}}\log\frac{N}{\alpha}
&\geq
\|\bar \eta_{k_0-1}-\eta\|_{L_2(\hat\Pi_{k_0-1})}\geq 
B_2\|\bar \eta_{k_0-1}-\eta\|_{\infty,\Omega\cap\hat A_{k_0-1}}
\end{align}

To bound 
$$
\|\hat\eta_{k_0-1}(x)-\bar\eta_{k_0-1}(x)\|_{\infty,\Omega\cap\hat A_{k_0-1}}
$$
we apply Proposition \ref{supnorm}. 
Recall that $\hat m_{k_0-1}$ depends only on the subsample $S_{k_0-1,1}$ 
but not on $S_{k_0-1,2}$. Let 
$$
\m T_{k}:=\left\{\left\{X_i^{(j)},Y_i^{(j)}\right\}_{i=1}^{N_j^{\act}}, \ j\leq k-1; \  S_{k,1}\right\}
$$
be the random vector that defines $\hat A_{k}$ and resolution level $\hat m_{k}$. Note that
$
\mb E(\hat\eta_{k_0-1}(x)|\m T_{k_0-1})=\bar\eta_{\hat m_{k_0-1}}(x) \quad \forall x \ \text{a.s.}
$ \\
Proposition \ref{supnorm} thus implies
\begin{align*}
 \nonumber
 \Pr\Bigg(\max_{x\in\Omega\cap\hat A_{k_0-1}}\lvert\hat\eta_{k_0-1}(x)-\bar\eta_{\hat m_{k_0-1}}(x)\rvert\
 &\geq K t\sqrt{\frac{2^{d\hat m_{k_0-1}}}{N_{k_0-1}}} \,
 \Bigg|\, \m T_{k_0-1}\Bigg)\leq \\
&
\leq
N \exp\left(\frac{-t^2}{2(1+\frac t3 C_3)}\right).
\end{align*}
Choosing $t=c\log(N/\alpha)$ and taking expectation, the inequality(now unconditional) becomes 
\begin{equation}\label{supnorm2}
\Pr\left(\max_{x\in\Omega\cap\hat A_{k_0-1}}\lvert\hat\eta_{\hat m_{k_0-1}}(x)-\bar\eta_{\hat m_{k_0-1}}(x)\rvert\
 \leq K \sqrt{\frac{2^{d\hat m_{k_0-1}}\log^2(N/\alpha)}{N_{k_0-1}}}\right)
\geq 1-\alpha
\end{equation} 
Let $\m E_3$ be the event on which (\ref{supnorm2}) holds true.
Combined, the estimates (\ref{Z1}),(\ref{supnorm1}) and (\ref{supnorm2}) imply that on $\m E_{1}\cap \m E_{2}\cap \m E_3$
\begin{align}\label{Z4}
\nonumber
\|\eta-\hat \eta_{k_0-1}\|_{\infty,\Omega\cap\hat A_{k_0-1}}&\leq 
\|\eta-\bar\eta_{k_0-1}\|_{\infty,\Omega\cap\hat A_{k_0-1}}+
\|\bar\eta_{k_0-1}-\hat \eta_{k_0-1}\|_{\infty, \Omega\cap\hat A_{k_0-1}}\\
& 
\leq
\frac{C}{B_2} N_{k_0-1}^{-\frac{\beta}{2\beta+d}}\log \frac{N}{\alpha}
+K\sqrt{\frac{2^{d\hat m_{k_0-1}}\log^2 (N/\alpha)}{N_{k_0-1}}}
\leq \\
& \nonumber
\leq (K+C)\cdot N_{k_0-1}^{-\frac{\beta}{2\beta+d}}\log^2 \frac{N}{\alpha}
\end{align} 
where we used the assumption $B_2\geq \log^{-1}N$.
Now the width of the confidence band is defined via 
\begin{equation}\label{width}
\delta_k:=
2(K+C)\cdot N_{k_0-1}^{-\frac{\beta}{2\beta+d}}\log^2 \frac{N}{\alpha}
\end{equation} 
(in particular, $\tilde D$ from {\bf Algorithm 1a} is equal to $2(K+C)$).
With the bound (\ref{Z4}) available, it is straightforward to finish the proof of the claim. 
Indeed, by (\ref{width}) and the definition of the active set, the necessary condition for 
$x\in \Omega\cap\hat A_{k_0}$ is 
$$
|\eta(x)|\leq 3(K+C)\cdot N_{k_0-1}^{-\frac{\beta}{2\beta+d}}\log^2 \frac{N}{\alpha},
$$
 so that 
\begin{align*}
\Pi(\hat A_{k_0})=\Pi(\Omega\cap\hat A_{k_0})&
\leq \Pi\left(|\eta(x)|\leq 3(K+C)\cdot N_{k_0-1}^{-\frac{\beta}{2\beta+d}}\log^2 \frac{N}{\alpha}\right)\leq \\
&
\leq \tilde B N_{k_0-1}^{-\frac{\beta\gamma}{2\beta+d}}\log^{2\gamma} \frac{N}{\alpha}
\end{align*}
by the low noise assumption.
This completes the proof of the claim since $\Pr\left(\m E_1\cap \m E_2\cap \m E_3\right)\geq 1-3\alpha$. 
\end{proof}
We conclude this section by discussing running time of the active learning algorithm. 
Assume that the algorithm has access to the sampling subroutine that, given $A\subset [0,1]^d$ with $\Pi(A)>0$, generates i.i.d. $(X_i,Y_i)$ with  $X_i\sim \Pi(dx|x\in A)$.
\begin{proposition}
\label{runningtime}
The running time of {\bf Algorithm 1a(1b)} with label budget $N$ is 
$$
\m O(dN\log^2 N).
$$
\end{proposition}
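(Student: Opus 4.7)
The plan is to bound the running time by amortizing all computation against the $N$ labels actually drawn, showing that each label incurs at most $\mathcal{O}(d\log^2 N)$ work.

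First I would bound the number $L$ of outer iterations. Since $N_k=2N_{k-1}$ with $N_0\simeq\sqrt{N}$, and since each completed iteration requests at least one label until the budget is exhausted, $L=\mathcal{O}(\log N)$; similarly, the admissible resolution set $\m J(N)$ in (\ref{index_set}) has cardinality $\mathcal{O}(\log N)$ by the upper bound on $\dim\m G_m$.

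Next I would analyze the work performed on iteration $k$. Drawing $N_k^{\act}$ labels costs $\mathcal{O}(N_k^{\act})$ by the sampling-oracle assumption. For the model-selection step the key observation is that for piecewise-constant $\m F_m$ on the dyadic partition the minimizer of the empirical squared loss in (\ref{res1}) is explicit: its value on each cube $R_i\subset\hat A_k$ is the sample mean of the labels whose $X_j$ falls into $R_i$. Thus computing the inner minimum for a single $m$ reduces to bucketing each of the $N_k^{\act}$ samples into its cell, which requires reading off $dm$ bits of the coordinate, i.e.\ $\mathcal{O}(dm)$ time per sample. Running over all candidates $m\in\m J(N)$ therefore costs $\mathcal{O}(N_k^{\act}\cdot d\log^2 N)$ on iteration $k$. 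Forming $\hat\eta_k$ from (\ref{est}) is linear in the number of active cells plus $N_k^{\act}$, while constructing the next active set $\hat A_{k+1}$ (which is nested in $\hat A_k$ because every $f\in\hat{\m F}_k$ coincides with $\hat\eta_{k-1}$ outside $\hat A_k$) reduces to one pass over the cells of $\m F_{\hat m_k}$ lying in $\hat A_k$; by (\ref{dimension}) and the definition of $\m J(N)$, the number of such cells is bounded by $N_k^{\act}/\log^2 N$, so this step is dominated by the model-selection cost.

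Finally, summing over $k\leq L$ and using $\sum_k N_k^{\act}\leq N$ yields the advertised bound $\mathcal{O}(dN\log^2 N)$. The main obstacle is the bookkeeping needed to guarantee that no sub-step ever iterates over the entire dyadic partition at resolution $\hat m_k$ (which could be as large as $2^{d\hat m_k}\simeq N/\log^2 N$); the nesting $\hat A_{k+1}\subset\hat A_k$ together with the dimension bound (\ref{dimension}) is precisely what makes the per-iteration work scale linearly in $N_k^{\act}$ rather than in the ambient partition size, and once this is established the remaining accounting is elementary.
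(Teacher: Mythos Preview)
Your proposal is correct and follows essentially the same approach as the paper: both arguments bound the cost of iteration $k$ by $\mathcal{O}(dN_k^{\act}\log^2 N)$ using (i) the explicit per-cell sample-average minimizer for the empirical squared loss, (ii) the $\mathcal{O}(\log N)$ candidate resolution levels in $\m J$, and (iii) the bound on the number of active cells coming from the definition of $\m J$; the final bound then follows from $\sum_k N_k^{\act}\le N$. The only cosmetic difference is that the paper allocates samples to cells via a sorting pass ($\mathcal{O}(dN_k^{\act}\log N)$ per level), whereas you bucket directly by reading the first $dm$ dyadic bits ($\mathcal{O}(dm)$ per sample per level); both lead to the same asymptotic estimate.
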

{\bf Remark } In view of Theorem \ref{main}, the running time required to output a classifier $\hat g$ such that
$
R_{P}(\hat g)-R^*\leq \eps
$
with probability $\geq 1-\alpha$ is 
$$
\m O\left(\left(\frac{1}{\eps}\right)^{\frac{2\beta+d-\beta\gamma}{\beta(1+\gamma)}}{\rm poly}\left(\log\frac{1}{\eps\alpha}\right)\right).
$$
\begin{proof}
We will use the notations of Theorem \ref{main}.
Let $N_k^{\act}$ be the number of labels requested by the algorithm on step $k$. 
The resolution level $\hat m_k$ is always chosen such that $\hat A_k$ is partitioned into at most $N_k^{\act}$ dyadic cubes, see (\ref{index_set}).
This means that the estimator $\hat \eta_k$ takes at most $N_k^{\act}$ distinct values. 
The key observation is that for any $k$, the active set $\hat A_{k+1}$ is always represented as the union of a finite number(at most $N_{k}^{\act}$) of dyadic cubes: 
to determine if a cube $R_j\subset \hat A_{k+1}$, it is enough to take a point $x\in R_j$ and compare 
${\rm sign}(\hat\eta_{k}(x)-\delta_{k})$ with ${\rm sign}(\hat\eta_{k}(x)+\delta_{k})$:
$R_j\in \hat A_{k+1}$ only if the signs are different(so that the confidence band crosses zero level). This can be done in $\m O(N_k^{\act})$ steps.\\
Next, resolution level $\hat m_k$ can be found in $\m O(N_k^{\act}\log^2 N)$ steps: there are at most $\log_2 N_k^{\act}$ models  to consider; for each $m$, $\inf_{f\in \m F_m}\hat P_k(Y-f(X))^2$ is found explicitly and is achieved for the piecewise-constant  
$$
\hat f(x)=\frac{\sum_i Y_i^{(k)}\m I_{R_j}(X_i^{(k)})}{\sum_i \m I_{R_j}(X_i^{(k)})}, \ x \in R_j.
$$
Sorting of the data required for this computation is done in $\m O(dN_k^{\act}\log N)$ steps for each $m$, so the whole $k$-th iteration running time is $\m O(dN_k^{\act}\log^2 N)$. 
Since $\sum\limits_k N_k^{\act}\leq N$, the result follows.
\end{proof}

\section{Conclusion and open problems}
\label{conclusion}
We have shown that active learning can significantly improve the quality of a classifier over the passive algorithm for a large class of underlying distributions. 
Presented method achieves fast rates of convergence for the excess risk, moreover, it is adaptive(in the certain range of smoothness  and noise parameters) and involves minimization only with respect to quadratic loss(rather than the $0-1$ loss).
\\ 
The natural question related to our results is:
\begin{itemize}
 \item Can we implement adaptive smooth estimators in the learning algorithm to extend our results beyond the case $\beta\leq 1$?
\end{itemize}
The answer to this second question is so far an open problem. 
Our conjecture is that the correct rate of convergence for the excess risk is
$
N^{-\frac{\beta(1+\gamma)}{2\beta+d-\gamma(\beta\wedge 1)}},
$
up to logarithmic factors,
which coincides with presented results for $\beta\leq 1$.
This rate can be derived from an argument similar to the proof of Theorem \ref{main} under the assumption that on every step $k$ one could construct an estimator $\hat\eta_k$ with
$$
\|\eta-\hat\eta_k\|_{\infty, \hat A_k}\lesssim N_k^{-\frac{\beta}{2\beta+d}}.
$$
At the same time, the active set associated to $\hat\eta_k$ should maintain some structure which is suitable for the iterative nature of the algorithm. 
Transforming these ideas into a rigorous proof is a goal of our future work.
\section*{Acknowledgements}
I want to express my deepest gratitude to my Ph.D. advisor, Dr. Vladimir Koltchinskii, for his support and numerous 
helpful discussions.\\
I am grateful to the anonymous reviewers for carefully reading the manuscript. 
Their insightful and wise suggestions helped to improve the quality of presentation and results.\\
I would like to acknowledge support for this project
from the National Science Foundation (NSF Grants DMS-0906880 and CCF-0808863) and by the Algorithms and Randomness Center, Georgia Institute of Technology, through the ARC Fellowship.
\appendix
\section{Functions satisfying assumption \ref{holder2}}\label{examples}
In the propositions below, we will assume for simplicity that the marginal distribution $\Pi$ is absolutely continuous with respect to Lebesgue measure with density $p(x)$ such that 
\begin{align}
\label{density}
&
0<p_1\leq p(x)\leq p_2<\infty \text{ for all } x\in [0,1]^d
\end{align}
Given $t\in(0,1]$, define $A_{t}:=\left\{x:\ |\eta(x)| \leq t\right\}$.
\begin{proposition}
\label{holder2:example1}
Suppose $\eta$ is Lipschitz continuous with Lipschitz constant $S$. 
Assume also that for some $t_*>0$ we have 
\begin{enumerate}[(a)]
\item
$\Pi\left(A_{t_*/3}\right)>0$;
\item
$\eta$ is twice differentiable for all $x\in A_{t_*}$;
\item 
$
\inf_{x\in A_{t_*}}\|\nabla\eta(x)\|_1 \geq s>0;
$
\item
$
\sup_{x\in A_{t_*}}\|D^2\eta(x)\|\leq C<\infty
$
where $\|\cdot\|$ is the operator norm.
\end{enumerate}
Then $\eta$  satisfies {\it assumption} \ref{holder2}.
\end{proposition}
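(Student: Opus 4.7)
The core mechanism is a local equivalence, on each dyadic cube $R_i$ of side $h = 2^{-m}$ contained in $A_{t_*}$, between the variance $\mathrm{Var}_\Pi(\eta|R_i)$ and the squared oscillation $M_i := \sup_{R_i}(\eta - c_i)^2$, where $c_i := \bar\eta_m|_{R_i} = \mb E_\Pi(\eta|R_i)$. Taylor-expanding $\eta$ to second order around a vertex of $R_i$ and using $\|\nabla\eta\|_1\geq s$, $\|D^2\eta\|\leq C$ on $A_{t_*}$, together with density bounds $p(x)\in[p_1,p_2]$, will give a threshold $h_0 = h_0(s,C,d)$ and constants $c_1,c_2>0$ depending only on $s,S,C,d,p_1,p_2$ such that, whenever $h \leq h_0$,
\begin{equation*}
\mathrm{Var}_\Pi(\eta|R_i) \geq c_1(sh)^2, \qquad M_i \leq (Sh)^2,
\end{equation*}
and hence $\mathrm{Var}_\Pi(\eta|R_i)\geq c_2 M_j$ for any two ``good'' cubes $R_i,R_j\subseteq A_{t_*}$, with $c_2 := c_1(s/S)^2$. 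Crucially the Lipschitz ceiling on $M_j$ is universal, valid on any cube.

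I then split the analysis into two regimes according to the parameter $t$ in (\ref{approx}). In \emph{Case A} ($t\leq t_*/3$), the sandwich $A_\Pi\subseteq\{|\eta|\leq 3t\}\subseteq A_{t_*}$ forces every cube $R_i\subseteq A$ to be good, and summing the per-cube bound gives
\begin{equation*}
\int(\eta-\bar\eta_m)^2\,d\hat\Pi_A = \frac{1}{\Pi(A)}\sum_{R_i\subseteq A}\mathrm{Var}_\Pi(\eta|R_i)\Pi(R_i) \geq c_2\,\|\eta-\bar\eta_m\|^2_{\infty,A_\Pi},
\end{equation*}
which is the desired inequality with $B_2 = c_2$.

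In \emph{Case B} ($t>t_*/3$), the set $A$ may extend outside $A_{t_*}$, and $M_{i^*}$ could be attained on a cube lacking a gradient lower bound; the remedy is to restrict the sum to cubes contained in $A_{t_*/3}$, which automatically lie in both $A$ (because $A\supseteq\{|\eta|\leq t\}\supseteq A_{t_*/3}$) and in $A_{t_*}$. The delicate step is a coarea-style estimate: since $\|\nabla\eta\|_2\geq s/\sqrt d$ on $A_{t_*}$, the Lebesgue volume of the boundary band $\{t_*/3-Sh\leq|\eta|\leq t_*/3+Sh\}$ is $O(h)$, so that for $h\leq h_1(\mu_*,s,S,p_2)$ (with $\mu_* := \Pi(A_{t_*/3})>0$ by hypothesis (a)) one has $\Pi(\bigcup\{R_i\subseteq A_{t_*/3}\})\geq \mu_*/2$. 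Combining this with the per-cube variance bound and the universal ceiling $M_{i^*}\leq(Sh)^2$ delivers $\int(\eta-\bar\eta_m)^2\,d\hat\Pi_A \geq (c_2\mu_*/2)\|\eta-\bar\eta_m\|^2_{\infty,A_\Pi}$. For the finitely many resolution levels $m$ with $2^{-m}>\min(h_0,h_1)$, the $\sigma$-algebra $\sigma(\m F_m)$ is finite, so only finitely many admissible sets $A$ exist; on each of them the ratio of the conditional $L_2$-norm squared to the squared sup-norm is strictly positive because $\eta$ is non-constant on any cube meeting $A_{t_*}$ (its gradient has $\ell_1$-norm at least $s$), and the infimum over this finite list is positive. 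The required $B_2$ is the minimum of the three constants. The main technical hurdle is Case B: converting the pointwise gradient lower bound into a uniform lower bound on $\Pi(\bigcup\{R_i\subseteq A_{t_*/3}\})$ via the smoothness of the level set $\{|\eta|=t_*/3\}$.
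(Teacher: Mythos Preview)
Your proof is correct and follows essentially the same architecture as the paper's: per-cube variance lower bounds via Taylor expansion combined with the global Lipschitz ceiling $\|\eta-\bar\eta_m\|_\infty\leq S\cdot 2^{-m}$, followed by the same two-case split according to whether $A\subseteq A_{t_*}$ or $A\supseteq A_{t_*/3}$.

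The only notable difference is in Case~B. The paper takes a shorter path: since $\{0<|\eta|<t_*/3\}$ is open with positive measure, it contains a \emph{single fixed} dyadic cube $R_*$ of side $2^{-m_*}$; for every $m\geq m_*$ this cube decomposes into level-$m$ sub-cubes sitting inside $A_{t_*}$, and summing the per-cube lower bound over these already yields $\int_{R_*}(\eta-\bar\eta_m)^2\,d\Pi\geq c\,2^{-2m}\Pi(R_*)$, which suffices after dividing by $\Pi(A)\leq 1$. Your route---collecting \emph{all} level-$m$ cubes contained in $A_{t_*/3}$ and arguing that their union has $\Pi$-measure at least $\mu_*/2$---also works, but the coarea-style $O(h)$ estimate on the boundary band is more than you need: the inclusion $\{|\eta|\leq t_*/3-Sh\}\subseteq\bigcup\{R_i\subseteq A_{t_*/3}\}$ (immediate from Lipschitz continuity) together with $\Pi(\{|\eta|=t_*/3\})=0$ already gives $\Pi(\bigcup R_i)\to\mu_*$ as $h\to 0$ by monotone continuity of measure. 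Your explicit handling of the finitely many coarse levels $m$ via compactness is a point the paper leaves implicit.
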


\begin{proof}
By intermediate value theorem, for any cube $R_i, \ 1\leq i\leq 2^{dm}$ there exists $x_0\in R_i$ such that $\bar\eta_m(x)=\eta(x_0), \ x\in R_i$. 
This implies
\begin{align*}
|\eta(x)-\bar\eta_m(x)|&=|\eta(x)-\eta(x_0)|=|\nabla\eta(\xi)\cdot(x-x_0)|\leq \\
&
\leq \|\nabla\eta(\xi)\|_1 \|x-x_0\|_{\infty}\leq S \cdot 2^{-m}
\end{align*}
On the other hand, if $R_i\subset A_{t_*}$ then
\begin{align}\label{lower2}
\nonumber
|\eta(x)-\bar\eta_m(x)|&=
|\eta(x)-\eta(x_0)|= \\
&
\nonumber
=|\nabla\eta(x_0)\cdot (x-x_0)+ 
\frac12 [D^2\eta(\xi)](x-x_0)\cdot (x-x_0)|\geq \\
& 
\geq |\nabla\eta(x_0)\cdot (x-x_0)|-\frac12\sup_{\xi}\|D^2\eta(\xi)\|\max_{x\in R_i}\|x-x_0\|_2^2\geq \\
&
\nonumber
\geq
 |\nabla\eta(x_0)\cdot (x-x_0)|-C_1 2^{-2m}
\end{align}
Note that a strictly positive continuous function 
$$
h(y,u)=\int\limits_{[0,1]^d}(u\cdot(x-y))^2 dx
$$ 
achieves its minimal value $h_* > 0$ on a compact set $[0,1]^d\times \left\{u\in \mb R^d: \|u\|_1=1\right\}$. 
This implies(using (\ref{lower2}) and the inequality $(a-b)^2\geq \frac{a^2}{2}-b^2)$
\begin{align*}
&
\Pi^{-1}(R_i)\int\limits_{R_i}(\eta(x)-\bar\eta_m(x))^2 p(x)dx\geq \\
&
\geq \frac12(p_2 2^{dm})^{-1}\int\limits_{R_i}(\nabla \eta(x_0)\cdot(x-x_0))^2 p_1 dx -
C_1^2 2^{-4m}\geq \\
&
\geq \frac12\frac{p_1}{p_2}\|\nabla\eta(x_0)\|_1^2 2^{-2m}\cdot h_*-C_1^2 2^{-4m}
\geq c_2 2^{-2m} \quad \text{  for } m\geq m_0.
\end{align*}

Now take a set $A\in \sigma(\m F_m), \ m\geq m_0$ from {\it assumption} \ref{holder2}. 
There are 2 possibilities: either $A\subset A_{t_*}$ or $A\supset A_{t_*/3}$.
In the first case the computation above implies 
\begin{align*}
 \int\limits_{[0,1]^d}\left(\eta-\bar\eta_m\right)^2 \Pi(dx|x\in A)&
 \geq c_2 2^{-2m} = \frac{c_2}{S^2} S^2 2^{-2m}\geq \\
 &
  \geq  \frac{c_2}{S^2}  \|\eta-\bar\eta_m\|_{\infty, A}^2
\end{align*}
If the second case occurs, note that, since $\left\{x: 0<|\eta(x)|<\frac{t_*}{3}\right\}$ has nonempty interior, 
it must contain a dyadic cube $R_*$ with edge length $2^{-m_*}$. 
Then for any $m\geq \max(m_0,m_*)$
\begin{align*}
&
 \int\limits_{[0,1]^d}\left(\eta-\bar\eta_m\right)^2 \Pi(dx|x\in A)\geq \\
 &
\geq  \Pi^{-1}(A)\int\limits_{R_*}\left(\eta-\bar\eta_m\right)^2 \Pi(dx)\geq \frac{c_2}{4}2^{-2m}\Pi(R_*)\geq \\
& \geq  
\frac{c_2}{S^2}\Pi(R_*)  \|\eta-\bar\eta_m\|_{\infty, A}^2
\end{align*}
and the claim follows. 
\end{proof}
The next proposition describes conditions which allow functions to have vanishing gradient on decision boundary but requires convexity and regular behaviour of the gradient. \\
Everywhere below, $\nabla\eta$ denotes the subgradient of a convex function $\eta$.\\
For $0<t_1<t_2$, define 
$G(t_1,t_2):=
\frac{\sup\limits_{x\in A_{t_2}\setminus A_{t_1}} \|\nabla \eta(x)\|_1}{\inf\limits_{x\in A_{t_2}\setminus A_{t_1}} \|\nabla \eta(x)\|_1}$. 
In case when $\nabla\eta(x)$ is not unique, we choose a representative that makes $G(t_1,t_2)$ as small as possible.
\begin{proposition}
\label{holder2:example2}
Suppose $\eta(x)$ is Lipschitz continuous with Lipschitz constant $S$. 
Moreover, assume that there exists $t_*>0$ and $q:(0,\infty)\mapsto (0,\infty)$ such that $A_{t_*}\subset~(0,1)^d$ and
\begin{enumerate}[(a)]
\item $b_1t^{\gamma}\leq\Pi(A_{t})\leq b_2 t^{\gamma} \ \forall t<t_*$;
\item For all $0<t_1<t_2\leq t_*, $ 
$G(t_1,t_2)\leq q\left(\frac{t_2}{t_1}\right)$;
\item Restriction of $\eta$ to any convex subset of $A_{t_*}$ is convex.
\end{enumerate}
Then $\eta$ satisfies {\it assumption} \ref{holder2}.\\
{\bf Remark} The statement remains valid if we replace $\eta$ by $|\eta|$ in (c).
\end{proposition}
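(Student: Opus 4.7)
The plan is to mirror Proposition \ref{holder2:example1}'s local analysis, replacing the uniform gradient lower bound by a propagation argument that uses (b) to transfer gradient estimates across the active set $A$, combined with (a) to control the mass of the near-boundary region.

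Fix $A \in \sigma(\m F_m)$ satisfying (\ref{approx}) with some $t > 0$, set $M := \|\eta - \bar\eta_m\|_{\infty, A_\Pi}$, and use the remark to split $A$ by the sign of $\eta$, so that WLOG $\eta \ge 0$ is convex on $A$. The case $3t > t_*$ reduces to the previous proposition by the splitting $A = (A \cap A_{t_*}) \cup (A \setminus A_{t_*})$: on $A \setminus A_{t_*}$ one has $\eta > t_*$, so convexity plus continuity supplies a uniform gradient lower bound and Proposition \ref{holder2:example1}'s computation applies verbatim. In the remaining case $3t \le t_*$, on each dyadic cube $R_j \subseteq A$ the intermediate value theorem gives $x_{0,j} \in R_j$ with $\eta(x_{0,j}) = \bar\eta_m|_{R_j}$; combining the convexity subgradient inequality with the Lipschitz upper bound, the squaring-and-integrating argument of (\ref{lower2}) adapts to yield
\begin{align*}
\omega_j := \|\eta - \bar\eta_m\|_{\infty, R_j} &\le C\,\|\nabla\eta\|_{\infty, R_j}\cdot 2^{-m},\\
\int_{R_j}(\eta - \bar\eta_m)^2\, d\Pi &\ge c\, p_1\,\|\nabla\eta(x_{0,j})\|_1^2 \cdot 2^{-(d+2)m},
\end{align*}
so that locally $\int_{R_j}(\eta - \bar\eta_m)^2\, d\Pi \ge c'\,\omega_j^2\,\Pi(R_j)/G_j^2$, where $G_j := \|\nabla\eta\|_{\infty, R_j}/\inf_{R_j}\|\nabla\eta\|_1$.

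To globalize, pick $k = k(b_1,b_2,\gamma)$ large enough that $b_2(t/k)^\gamma \le \tfrac12 b_1 t^\gamma$ and set $A' := A \cap \{|\eta| > t/k\}$; by (a), $\Pi(A \setminus A') \le \Pi(A_{t/k}) \le \tfrac12 b_1 t^\gamma \le \tfrac12\Pi(A_t) \le \tfrac12\Pi(A)$. Every point of $A'$ lies in the level band $\{t/k < |\eta| \le 3t\}$, so by (b), $G_j \le q(3k)$ uniformly on cubes in $A'$ (up to a Lipschitz-slack shrinking of $A'$ to $\{|\eta| > 2t/k\}$, which by (a) still carries a constant fraction of $\Pi(A)$). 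The maximizing cube $R_{j^*}$, where $\omega_{j^*} = M$, sits in this outer band via a nested-band application of (b), yielding $\|\nabla\eta\|_{\infty, R_{j^*}} \ge cM\cdot 2^m$; propagating via (b), $\|\nabla\eta(x_{0,j})\|_1 \ge cM\cdot 2^m/q(3k)$ uniformly on cubes in $A'$. Summing the per-cube $L_2$ bound over $A'$ gives
\begin{equation*}
\int_{A_\Pi}(\eta - \bar\eta_m)^2\, d\Pi \ge c\, p_1\, M^2\,\Pi(A)/q(3k)^2,
\end{equation*}
which is Assumption \ref{holder2} with $B_2 := c\, p_1/q(3k)^2$, depending only on $b_1,b_2,\gamma,p_1$, and the function $q$.

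The main obstacle is placing $R_{j^*}$ in the outer band: a priori, $\omega_{j^*}$ being large need not force $\|\nabla\eta\|$ on $R_{j^*}$ to be large everywhere, since gradients could vanish nearby. Rigorously, one applies (b) to the dyadic annuli $A_{3t\cdot 2^{-i}} \setminus A_{3t\cdot 2^{-i-1}}$ and telescopes a factor of $q(2)$ across bands to conclude that $\sup_{A_{3t}}\|\nabla\eta\|$ is attained, up to a constant depending only on $q$, on the outermost band, forcing $R_{j^*}$ into that band. A secondary technicality is the degeneracy of the per-cube bound when $x_{0,j}$ lies near a face of $R_j$; this is handled by decomposing $R_j$ along the sublevel hypersurface $\{\eta \le \eta(x_{0,j})\}$ (convex by convexity of $\eta$) and integrating on each side, each of which inherits a comparable fraction of $|R_j|$.
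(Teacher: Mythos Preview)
Your overall architecture---reduce to a level band via (a), control gradient ratios via (b), get per-cube $L_2$ lower bounds from the subgradient inequality---matches the paper's. But two steps are genuine gaps, not technicalities.

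\textbf{The ``$x_{0,j}$ near a face'' issue is the heart of the proof, and your handling does not work.} You write that decomposing $R_j$ along $\{\eta\le\eta(x_{0,j})\}$ gives two pieces each with comparable fraction of $|R_j|$. That is exactly what must be \emph{proved}, and it is false without (b): if $x_{0,j}$ hugs a face, the convex sublevel set can occupy almost all of $R_j$ (or almost none), so integrating $\big(\nabla\eta(x_{0,j})\cdot(x-x_{0,j})\big)^2$ over the positive half-space intersected with $R_j$ can be arbitrarily small relative to $\|\nabla\eta(x_{0,j})\|^2 2^{-(d+2)m}$. The paper's argument is different: it shows that $\dist_\infty(x_{0,j},\partial R_j)\ge \nu(k)2^{-m}$ for some $\nu>0$, and the proof of this uses (b) in an essential way (comparing, via the gradient ratio bound $q(k)$, the gain of $\eta$ across the boundary strip $R_\delta$ against the drop of $\eta$ along a ray from the center, and deriving a contradiction for small $\delta$). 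Your proposal never invokes (b) at this step.

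\textbf{The telescoping argument is wrong.} Applying (b) to the annuli $A_{3t\cdot 2^{-i}}\setminus A_{3t\cdot 2^{-i-1}}$ and telescoping $q(2)$ gives a product over infinitely many bands as $|\eta|\to 0$, hence no bound. More to the point, even a finite such bound would not ``force $R_{j^*}$ into the outer band'': large oscillation $\omega_{j^*}$ only says some gradient on $R_{j^*}$ is large, not that $R_{j^*}$ avoids $A_{t/k}$. The correct (and simpler) argument, used in the paper, is that convexity alone implies $\sup_{A_{3t}}\|\nabla\eta\|$ is attained on $\partial A_{3t}=\{|\eta|=3t\}$ (monotonicity of the subdifferential along the gradient direction), which lies in the annulus $A_{3t}\setminus A_{t/k}$; then (b) transfers this to $\inf$ over the annulus, and the Lipschitz bound $M\le \sup_{A_{3t}}\|\nabla\eta\|\cdot 2^{-m}$ closes the loop without ever locating $R_{j^*}$.

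A smaller point: the cubes $R_j$ partitioning $A$ need not lie entirely in the annulus $\{t/k<|\eta|\le 3t\}$, so ``$G_j\le q(3k)$ on cubes in $A'$'' is ill-posed. The paper fixes this by passing to dyadic sub-cubes $T_i\subset R_i\setminus A_{t/k}$ at a finer resolution $m+z$, with $z$ depending only on $k$; your ``Lipschitz-slack shrinking'' does not produce a union of dyadic cubes.
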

\begin{proof}
Assume that for some $t\leq t_*$ and $k>0$ 
$$
R\subset A_{t}\setminus A_{t/k}
$$ 
is a dyadic cube with edge length $2^{-m}$ and let $x_0$ be such that 
$\bar\eta_m(x)=\eta(x_0), \ x\in R$. 
Note that $\eta$ is convex on $R$ due to (c).
Using the subgradient inequality $\eta(x)-\eta(x_0)\geq\nabla\eta(x_0)\cdot(x-x_0)$, we obtain
\begin{align}\label{subgrad}
\nonumber
&
\int\limits_{R}(\eta(x)-\eta(x_0))^2 d\Pi(x)\geq
\int\limits_{R}(\eta(x)-\eta(x_0))^2 \m I\left\{\nabla\eta(x_0)\cdot(x-x_0)\geq 0\right\}d\Pi(x) \\
&
\geq \int\limits_{R}\left(\nabla\eta(x_0)\cdot(x-x_0)\right)^2 \m I\left\{\nabla\eta(x_0)\cdot(x-x_0)\geq 0\right\} d\Pi(x)
\end{align}
The next step is to show that under our assumptions $x_0$ can be chosen such that
\begin{align}\label{ball}
&
\dist_{\infty}(x_0,\partial R)\geq \nu 2^{-m}
\end{align}
where $\nu=\nu(k)$ is independent of $m$.
In this case any part of $R$ cut by a hyperplane through $x_0$ contains half of a ball $B(x_0,r_0)$ of radius 
$r_0=\nu(k) 2^{-m}$ and the last integral in (\ref{subgrad}) can be further bounded below to get
\begin{align}
\label{subgrad2}
\nonumber
\int\limits_{R}(\eta(x)-\eta(x_0))^2 d\Pi(x)
&
\geq
\frac12\int\limits_{B(x_0,r_0)}\left(\nabla\eta(x_0)\cdot(x-x_0)\right)^2 p_1 dx\geq  \\
&
\geq c(k)\|\nabla\eta(x_0)\|_1^2 2^{-2m}2^{-dm}
\end{align}
It remains to show (\ref{ball}). 
Assume that for all $y$ such that $\eta(y)=\eta(x_0)$ we have 
$$
\dist_{\infty}(y,\partial R)\leq \delta 2^{-m}
$$
for some $\delta>0$. 
This implies that the boundary of the convex set 
$$
\left\{x\in R:\eta(x)\leq \eta(x_0)\right\}
$$ 
is contained in 
$R_{\delta}:=\left\{x\in R: \dist_{\infty}(x,\partial R)\leq \delta2^{-m}\right\}$.
There are two possibilities: either $\left\{x\in R:\eta(x)\leq \eta(x_0)\right\}\supseteq R\setminus R_{\delta}$ or
$\left\{x\in R:\eta(x)\leq \eta(x_0)\right\}\subset R_{\delta}$.\\
We consider the first case only(the proof in the second case is similar). 
First, note that by (b) for all $x\in R_{\delta}$ $\|\nabla\eta(x)\|_1\leq q(k)\|\nabla\eta(x_0)\|_1$ and
\begin{align}
\label{A1}
\nonumber
\eta(x)&\leq \eta(x_0)+\|\nabla\eta(x)\|_1\delta 2^{-m}\leq \\
&
\leq \eta(x_0)+q(k)\|\nabla\eta(x_0)\|_1 \delta2^{-m}
\end{align}
Let $x_c$ be the center of the cube $R$ and 
 $u$ - the unit vector in direction $\nabla \eta(x_c)$.
Observe that
\begin{align*}
\eta(x_c+(1-3\delta)2^{-m} u)-\eta(x_c)&
\geq\nabla\eta(x_c)\cdot (1-3\delta)2^{-m }u=\\
&
=(1-3\delta)2^{-m}\|\nabla\eta(x_c)\|_2
\end{align*}
On the other hand, $x_c+(1-3\delta)2^{-m} u\in R\setminus R_{\delta}$ and
$$
\eta(x_c+(1-3\delta)2^{-m} u)\leq \eta(x_0),
$$ 
hence
$
\eta(x_c)\leq \eta(x_0)-c(1-3\delta)2^{-m}\|\nabla\eta(x_c)\|_1
$.
Consequently, for all 
$$
x\in B(x_c,\delta):=\left\{x:\|x-x_c\|_{\infty}\leq \frac12 c2^{-m}(1-3\delta)\right\}
$$ 
we have 
\begin{align}\label{A2}
\nonumber
\eta(x)&\leq \eta(x_c)+\|\nabla\eta(x_c)\|_1\|x-x_c\|_{\infty}\leq \\
&
\leq
\eta(x_0)-\frac12 c2^{-m}(1-3\delta)\|\nabla\eta(x_c)\|_1
\end{align}
Finally, recall that $\eta(x_0)$ is the average value of $\eta$ on $R$. 
Together with (\ref{A1}),(\ref{A2}) this gives
\begin{align*}
\Pi(R)\eta(x_0)&=\int\limits_R \eta(x)d\Pi
=\int\limits_{R_\delta}\eta(x)d\Pi+\int\limits_{R\setminus R_\delta}\eta(x)d\Pi\leq \\
&
\leq (\eta(x_0)+q(k)\|\nabla\eta(x_0)\|_1\delta2^{-m}) \Pi(R_\delta)+\\
&
+(\eta(x_0)- c_2 2^{-m}(1-3\delta)\|\nabla\eta(x_0)\|_1)\Pi\left(B(x_c,\delta)\right)+\\
&
+\eta(x_0)\Pi(R\setminus(R_{\delta}\cup B(x_c,\delta)))=\\
&
=\Pi(R)\eta(x_0)+q(k)\|\nabla\eta(x_0)\|_1\delta2^{-m}\Pi(R_\delta)-\\
&- c_2 2^{-m}(1-3\delta)\|\nabla\eta(x_0)\|_1\Pi\left(B(x_c,\delta)\right)
\end{align*}
Since 
$\Pi(R_\delta)\leq p_2 2^{-dm}$ and 
$\Pi(B(x_c,\delta))\geq c_3 2^{-dm}(1-3\delta)^d$, the inequality above implies
$$
c_4 q(k)\delta\geq (1-3\delta)^{d+1}
$$
which is impossible for small $\delta$(e.g., for $\delta< \frac{c}{q(k)(3d+4)}$).\\
Let $A$ be a set from condition \ref{holder2}. 
If $A\supseteq A_{t_*/3}$, then there exists a dyadic cube $R_*$ with edge length $2^{-m_*}$ such that 
$R_*\subset A_{t_*/3}\setminus A_{t_*/k}$ for some $k>0$, and the claim follows from (\ref{subgrad2}) as in proposition \ref{holder2:example1}.\\
Assume now that $A_{t}\subset A\subset A_{3t}$ and $3t\leq t_*$. 
Condition (a) of the proposition implies that for any $\eps>0$ we can choose $k(\eps)>0$ large enough so that 
\begin{align}
\label{measure1}
&
\Pi(A\setminus A_{t/k})\geq\Pi(A)-b_2(t/k)^{\gamma}\geq
\Pi(A)-\frac{b_2}{b_1}k^{-\gamma}\Pi(A_t)\geq (1-\eps)\Pi(A)
\end{align}
This means that for any partition of $A$ into dyadic cubes $R_i$ with edge length $2^{-m}$ at least half of them satisfy
\begin{align}
\label{index}
&
\Pi(R_i\setminus A_{t/k})\geq (1-c\eps)\Pi(R_i)
\end{align}
Let $\m I$ be the index set of cardinality $|\m I |\geq c \Pi(A) 2^{dm-1}$ such that (\ref{index}) is true for $i\in \m I$. 
Since $R_i\cap A_{t/k}$ is convex, there exists
\footnote{If, on the contrary, every sub-cube with edge length $2^{-(m+z)}$ contains a point from $A_{t/k}$, then $A_{t/k}$ must contain the convex hull of these points which would contradict (\ref{measure1}) for large $z$.}
$z=z(\eps)\in \mb N$ such that for any such cube $R_i$ there exists a dyadic sub-cube with edge length $2^{-(m+z)}$ entirely contained in $R_i\setminus A_{t/k}$:
$$
T_i\subset R_i\setminus A_{t/k}\subset A_{3t}\setminus A_{t/k}.
$$
It follows that 
$\Pi\big(\bigcup\limits_i T_i\big)\geq \tilde c(\eps)\Pi(A)$.
Recall that condition (b) implies 
$$
\frac{\sup\limits_{x\in \cup_i T_i} \|\nabla \eta(x)\|_1}{\inf\limits_{x\in \cup_i T_i} \|\nabla \eta(x)\|_1}\leq q(3k)
$$
Finally, $\sup\limits_{x\in A_{3t}} \|\nabla\eta(x)\|_2$ is attained at the boundary point, that is for some $x_*: |\eta(x_*)|=3t$, and by (b) 
$$
\sup_{x\in A_{3t}}\|\nabla\eta(x)\|_1\leq
\sqrt{d}\|\nabla\eta(x_*)\|_1\leq
q(3k)\sqrt{d} \inf\limits_{x\in A_{3t}\setminus A_{t/k}}\|\nabla\eta(x)\|_1.
$$
Application of (\ref{subgrad2}) to every cube $T_i$ gives
\begin{align*}
\sum_{i\in \m I}& \int\limits_{T_i}(\eta(x)-\bar\eta_{m+z}(x))^2  d\Pi(x)\geq 
c_1(k)\Pi(A)|\m I |\inf\limits_{x\in A_{3t}\setminus A_{t/k}}\|\nabla\eta(x)\|_1^2 2^{-2m}2^{-dm}\geq \\
&
\geq c_2(k)\Pi(A)\sup\limits_{x\in A_{3t}} \|\nabla\eta(x)\|_1^2 2^{-2m}\geq c_3(k)\Pi(A)\|\eta-\bar\eta(m)\|_{\infty,A}^2
\end{align*}
concluding the proof.
\end{proof}

\section{Proof of Theorem \ref{opt_selection}}\label{res_lvl}
The main ideas of this proof, which significantly simplifies and clarifies initial author's version, are due to V. Koltchinskii.
For conveniece and brevity, let us introduce additional notations. 
Recall that 
$$
s_m=m(s+\log\log_2 N)
$$ 
Let 
\begin{align*}
&
\tau_N(m,s):=K_1\frac{2^{dm}+s_m}{N} \\
&
\pi_N(m,s):=K_2\frac{2^{dm}+s+\log\log_2 N}{N}
\end{align*}
By $\m E_P(\m F,f)$ (or $\m E_{P_N}(\m F,f)$) we denote the excess risk of 
$f\in \m F$ with respect to the true (or empirical) measure:
\begin{align*}
& 
\m E_P(\m F,f):=P(y-f(x))^2-\inf_{g\in \m F}P(y-g(x))^2 \\
&
\m E_{P_N}(\m F,f):=P_N(y-f(x))^2-\inf_{g\in \m F}P_N(y-g(x))^2
\end{align*} 
It follows from Theorem 4.2 in \citet{kolt6} and the union bound that there exists an event $\m B$ of probability $\geq 1-e^{-s}$ such that on this event the following holds for all $m$ such that $dm\leq \log N$:
\begin{align}\label{event}
& \nonumber
\m E_P(\m F_m,\hat{f}_{\hat m})\leq \pi_{N}(m,s) \\
\forall \ f\in \m F_m, 
\quad & \m E_P(\m F_m,f)\leq 2(\m E_{P_N}(\m F_m,f)\vee \pi_{N}(m,s))
\\ \nonumber
\forall \ f\in \m F_m, 
\quad& \m E_{P_N}(\m F_m,f)\leq \frac32(\m E_P(\m F_m,f)\vee \pi_{N}(m,s)).
\end{align}
We will show that on $\m B$, $\left\{\hat m\leq \bar m\right\}$ holds. 
Indeed, assume that, on the contrary, $\hat m>\bar m$; by definition of $\hat m$, we have
$$
P_N(Y-\hat f_{\hat m})^2+\tau_N(\hat m,s)\leq P_N(Y-\hat f_{\bar m})^2+\tau_N(\bar m,s),
$$
which implies 
$$
\m E_{P_N}(\m F_{\hat m},\hat f_{\bar m})\geq 
\tau_N(\hat m,s)-\tau_N(\bar m,s)>3\pi_N(\hat m,s)
$$
for $K_1$ big enough. By (\ref{event}), 
$$
\m E_{P_N}(\m F_{\hat m},\hat f_{\bar m})=
\inf\limits_{f\in \m F_{\bar m}}\m E_{P_N}(\m F_{\hat m},f)\leq 
\frac32\left(\inf_{f\in \m F_{\bar m}}\m E_P(\m F_{\hat m},f)\vee \pi_{N}(\hat m,s)\right),
$$
and combination the two inequalities above yields
\begin{equation}\label{A}
\inf_{f\in \m F_{\bar m}}\m E_P(\m F_{\hat m},f)>
\pi_N(\hat m,s)
\end{equation}
Since for any $m$ $\m E_P(\m F_{m},f)\leq \mb E(f(X)-\eta(X))^2$,  the definition of $\bar m$ and (\ref{A}) imply that
$$
\pi_N(\bar m,s)\geq \inf_{f\in \m F_{\bar m}}\mb E(f(X)-\eta(X))^2>\pi_N(\hat m,s),
$$ 
contradicting our assumption, hence proving the claim.
\nocite{gaiffas1}
\nocite{kolt5}
\nocite{tsyb1}
\nocite{wellner1}
\nocite{hanneke1}
\nocite{hanneke2}
\nocite{balcan1}
\nocite{tsyb2}
\nocite{gaiffas1}
\nocite{nickl1}
\bibliographystyle{plainnat}
\bibliography{bibliography}

\begin{thebibliography}{14}
\providecommand{\natexlab}[1]{#1}
\providecommand{\url}[1]{\texttt{#1}}
\expandafter\ifx\csname urlstyle\endcsname\relax
  \providecommand{\doi}[1]{doi: #1}\else
  \providecommand{\doi}{doi: \begingroup \urlstyle{rm}\Url}\fi

\bibitem[Audibert and Tsybakov(2005)]{tsyb2}
J.-Y. Audibert and A.~B. Tsybakov.
\newblock Fast learning rates for plug-in classifiers.
\newblock \emph{Preprint}, 2005.
\newblock Available at:
  \url{http://imagine.enpc.fr/publications/papers/05preprint_AudTsy.pdf}.

\bibitem[Balcan et~al.(2008)Balcan, Hanneke, and Wortman]{hanneke1}
M.-F. Balcan, S.~Hanneke, and J.~Wortman.
\newblock The true sample complexity of active learning.
\newblock In \emph{COLT}, pages 45--56, 2008.

\bibitem[Balcan et~al.(2009)Balcan, Beygelzimer, and Langford]{balcan1}
M.-F. Balcan, A.~Beygelzimer, and J.~Langford.
\newblock Agnostic active learning.
\newblock \emph{J. Comput. System Sci.}, 75\penalty0 (1):\penalty0 78--89,
  2009.

\bibitem[Castro and Nowak(2008)]{castro1}
R.~M. Castro and R.~D. Nowak.
\newblock Minimax bounds for active learning.
\newblock \emph{IEEE Trans. Inform. Theory}, 54\penalty0 (5):\penalty0
  2339--2353, 2008.

\bibitem[Ga{\"{\i}}ffas(2007)]{gaiffas1}
S.~Ga{\"{\i}}ffas.
\newblock Sharp estimation in sup norm with random design.
\newblock \emph{Statist. Probab. Lett.}, 77\penalty0 (8):\penalty0 782--794,
  2007.

\bibitem[Gin{\'e} and Nickl(2010)]{nickl1}
E.~Gin{\'e} and R.~Nickl.
\newblock Confidence bands in density estimation.
\newblock \emph{Ann. Statist.}, 38\penalty0 (2):\penalty0 1122--1170, 2010.

\bibitem[Hanneke(2011)]{hanneke2}
S.~Hanneke.
\newblock Rates of convergence in active learning.
\newblock \emph{Ann. Statist.}, 39\penalty0 (1):\penalty0 333--361, 2011.

\bibitem[Hoffmann and Nickl(2011)]{nickl2}
M.~Hoffmann and R.~Nickl.
\newblock On adaptive inference and confidence bands.
\newblock \emph{The Annals of Statistics}, \penalty0 (to appear), 2011.

\bibitem[Koltchinskii(2010)]{kolt5}
V.~Koltchinskii.
\newblock Rademacher complexities and bounding the excess risk in active
  learning.
\newblock \emph{J. Mach. Learn. Res.}, 11:\penalty0 2457--2485, 2010.

\bibitem[Koltchinskii(2011)]{kolt6}
V.~Koltchinskii.
\newblock \emph{Oracle inequalities in empirical risk minimization and sparse
  recovery problems}.
\newblock Springer, 2011.
\newblock Lectures from the 38th Probability Summer School held in Saint-Flour,
  2008, {\'E}cole d'{\'E}t{\'e} de Probabilit{\'e}s de Saint-Flour.

\bibitem[Low(1997)]{low1}
M.~G. Low.
\newblock On nonparametric confidence intervals.
\newblock \emph{Ann. Statist.}, 25\penalty0 (6):\penalty0 2547--2554, 1997.

\bibitem[Tsybakov(2004)]{tsyb1}
A.~B. Tsybakov.
\newblock Optimal aggregation of classifiers in statistical learning.
\newblock \emph{Ann. Statist.}, 32\penalty0 (1):\penalty0 135--166, 2004.

\bibitem[Tsybakov(2009)]{tsyb3}
A.~B. Tsybakov.
\newblock \emph{Introduction to Nonparametric Estimation}.
\newblock Springer, 2009.

\bibitem[van~der Vaart and Wellner()]{wellner1}
A.~W. van~der Vaart and J.~A. Wellner.
\newblock \emph{Weak convergence and empirical processes}.
\newblock Springer Series in Statistics.

\end{thebibliography}

\end{document}